\newtheorem{thm}{Theorem}[section]
\newtheorem{corollary}[thm]{Corollary}
\newtheorem{lemma}[thm]{Lemma}
\newtheorem{proposition}[thm]{Proposition}
\newtheorem{claim}{\sc Claim}
\theoremstyle{definition}
\newtheorem{definition}[thm]{Definition}
\newtheorem{example}[thm]{Example}
\theoremstyle{remark}
\newtheorem{remark}[thm]{Remark}
\newcommand{\field}[1]{\mathbb{#1}}
\newcommand{\Q }{\field{Q}}
\newcommand{\Z }{\field{Z}}
\newcommand{\N }{\field{N}}
\DeclareMathOperator{\Max}{Max}
  \newcounter{xenumi}
  \newenvironment{xenumerate}{%
  \begin{list}{(\rm\arabic{xenumi})}{
    \setcounter{xenumi}{1}\usecounter{xenumi}
    \setlength{\parsep}{4\p@ \@plus2\p@ \@minus\p@}
    \setlength{\topsep}{6\p@ \@plus2\p@ \@minus2\p@}
    \setlength{\itemsep}{2\p@ \@plus1\p@ \@minus\p@}
    \setlength{\labelwidth}{0mm}
    \setlength{\labelsep}{2mm}
    \setlength{\itemindent}{2mm}
    \setlength{\leftmargin}{0mm}
    \setlength{\listparindent}{0mm}
  }}{\end{list}}
\begin{document}

\title[$t$-Reductions and $t$-integral closure of ideals]{$t$-Reductions and $t$-integral closure of ideals $^{(\star)}$}

\thanks{$^{(\star)}$ This work was supported by King Fahd University of Petroleum \& Minerals under DSR Grant \# RG1328.}

\author[S. Kabbaj]{S. Kabbaj}
\address{Department of Mathematics and Statistics, King Fahd University of Petroleum \& Minerals, Dhahran 31261, KSA}
\email{kabbaj@kfupm.edu.sa}

\author[A. Kadri]{A. Kadri}
\address{Department of Mathematics and Statistics, King Fahd University of Petroleum \& Minerals, Dhahran 31261, KSA}
\email{g201004080@kfupm.edu.sa}

\date{\today}

\subjclass[2010]{13A15, 13A18, 13F05, 13G05, 13C20}



\begin{abstract}
 Let $R$ be an integral domain and $I$ a nonzero ideal of $R$.  An ideal $J\subseteq I$ is a $t$-reduction of $I$ if $(JI^{n})_{t}=(I^{n+1})_{t}$ for some integer $n\geq0$. An element $x\in R$ is $t$-integral over $I$ if there is an equation $x^{n}+a_{1}x^{n-1}+...+a_{n-1}x+a_{n}=0$ with $a_{i}\in (I^{i})_{t}$ for $i=1,...,n$. The set of all elements that are $t$-integral over $I$ is called the $t$-integral closure of $I$. This paper investigates the $t$-reductions and $t$-integral closure of ideals. Our objective is to establish satisfactory $t$-analogues of well-known results, in the literature, on the integral closure of ideals and its correlation with reductions. Namely, Section 2 identifies basic properties of $t$-reductions of ideals and features explicit examples discriminating between the notions of reduction and $t$-reduction. Section 3 investigates the concept of $t$-integral closure of ideals, including its correlation with $t$-reductions. Section 4 studies the persistence and contraction of $t$-integral closure of ideals under ring homomorphisms. All along the paper, the main results are illustrated with original examples.
\end{abstract}
\maketitle

\section{Introduction}

\noindent Throughout, all rings considered are commutative with identity. Let $R$ be a ring and $I$ an ideal of $R$. An ideal $J\subseteq I$ is a reduction of $I$ if $JI^{n}=I^{n+1}$ for some positive integer $n$. An ideal which has no reduction other than itself is called a basic ideal \cite{H1,H2,NR}. The notion of reduction was introduced by Northcott and Rees and its usefulness resides mainly in two facts: ``First, it defines a relationship between two ideals which is preserved under homomorphisms and ring extensions; secondly, what we may term the reduction process gets rid of superfluous elements of an ideal without disturbing the algebraic multiplicities associated with it" \cite{NR}. The main purpose of their paper was to contribute to the analytic theory of ideals in  Noetherian (local) rings via minimal reductions.

Reductions happened to be a very useful tool for the theory of integral dependence over ideals. Let $I$ be an ideal in a ring $R$. An element $x\in R$ is integral over $I$ if there is an equation $x^{n}+a_{1}x^{n-1}+...+a_{n-1}x+a_{n}=0$ with $a_{i}\in I^{i}$ for $i=1,...,n$. The set of all elements that are integral over $I$ is called the integral closure of $I$, and is denoted by $\overline{I}$. If $I=\overline{I}$, then $I$ is called integrally closed. It turned out that an element $x\in R$ is integral over $I$ if and only if $I$ is a reduction of $I+Rx$; and if $I$ is finitely generated, then $I\subseteq \overline{J}$ if and only if $J$ is a reduction of $I$ \cite[Corollary 1.2.5]{HS2}. This correlation allowed to prove a number of crucial results in the theory including the fact that the integral closure of an ideal is an ideal \cite[Corollary 1.3.1]{HS2}. For a full treatment of this topic, we refer the reader to Huneke and Swanson's book ``Integral closure of ideals, rings, and modules" \cite{HS2}.

Let $R$ be a domain with quotient field $K$, $I$ a nonzero fractional ideal of $R$, and let $I^{-1}:=(R:I)=\{x\in K\mid xI\subseteq R\}$. The $v$- and $t$-closures of $I$ are defined, respectively, by $I_v:=(I^{-1})^{-1}$ and $I_t:=\cup J_v$, where $J$ ranges over the set of finitely generated subideals of $I$. The ideal $I$ is a $v$-ideal (or divisorial) if $I_v=I$ and a $t$-ideal if $I_t=I$. Under the ideal $t$-multiplication $(I,J)\mapsto (IJ)_t$ the set $F_{t}(R)$ of fractional $t$~-ideals of $R$ is a semigroup with unit $R$. Recall that factorial domains, Krull domains, GCDs, and P$v$MDs can be regarded as $t$-analogues of the principal domains, Dedekind domains, B\'ezout domains, and Pr\"ufer domains, respectively. For instance, a domain is Pr\"ufer (resp., a P$v$MD) if every nonzero finitely generated ideal is invertible (resp., $t$-invertible). For some relevant works on $v$- and $t$-operations, we refer the reader to \cite{GHL,HZ,KM2007,KM2009,Kg2,P,Z1,Z2,Z4}.

 This paper investigates the $t$-reductions and $t$-integral closure of ideals. Our objective is to establish satisfactory $t$-analogues of well-known results, in the literature, on the integral closure of ideals and its correlation with reductions. Namely, Section 2 identifies basic properties of $t$-reductions of ideals and features explicit examples discriminating between the notions of reduction and $t$-reduction. Section 3 investigates the concept of $t$-integral closure of ideals, including its correlation with $t$-reductions. Section 4 studies the persistence and contraction of $t$-integral closure of ideals under ring homomorphisms. All along the paper, the main results are illustrated with original examples.

\section{$t$-Reductions of ideals}\label{d}

\noindent This section identifies basic ideal-theoretic properties of the notion of $t$-reduction including its behavior under localizations. As a prelude to this, we provide explicit examples discriminating between the notions of reduction and $t$-reduction.

Recall that, in a ring $R$, a subideal $J$ of an ideal $I$ is called a reduction of $I$ if $JI^{n}=I^{n+1}$ for some positive integer $n$ \cite{NR}. An ideal which has no reduction other than itself is called a basic ideal \cite{H1,H2}.

\begin{definition}[{cf. \cite[Definition 1.1]{HKM}}]\label{d:def1}
Let $R$ be a domain and $I$ a nonzero ideal of $R$. An ideal $J\subseteq I$ is a \emph{$t$-reduction} of $I$ if $(JI^{n})_{t}=(I^{n+1})_{t}$ for some integer $n\geq0$ (and, a fortiori, the relation holds for $n\gg 0$). The ideal $J$ is a \emph{trivial $t$-reduction} of $I$ if $J_{t}=I_{t}$. The ideal $I$ is \emph{$t$-basic} if it has no $t$-reduction other than the trivial $t$-reductions.
\end{definition}

At this point, recall a basic property of the $t$-operation (which, in fact, holds for any star operation) that will be used throughout the paper. For any two nonzero ideals $I$ and $J$ of a domain, we have $(IJ)_{t}=(I_{t}J)_{t}=(IJ_{t})_{t}=(I_{t}J_{t})_{t}$. So, obviously, for nonzero ideals $J\subseteq I$, we always have:
$$J\ \text{is a $t$-reduction of}\ I \Leftrightarrow J\ \text{is a $t$-reduction of}\ I_{t} \Leftrightarrow  J_{t}\ \text{is a $t$-reduction of}\ I_{t}.$$
 Notice also that a reduction is necessarily a $t$-reduction; and the converse is not true, in general, as shown by the next example which exhibits a domain $R$ with two $t$-ideals $J\subsetneqq I$ such that $J$ is a $t$-reduction but not a reduction of $I$.

\begin{example}\label{d:exa1}
We use a construction from \cite{Hut}. Let $x$ be an indeterminate over $\Z$ and let $R := \Z[3x,x^{2},x^{3}]$, $I := (3x,x^{2},x^{3})$, and $J := (3x,3x^{2},x^{3},x^{4})$. Then $J\subsetneqq I$ are two finitely generated $t$-ideals of $R$ such that: $$JI^{n}\subsetneqq I^{n+1} \forall\ n\in\N\ \text{ and }\  (JI)_{t}=(I^{2})_{t}.$$
\end{example}

\begin{proof}
$I$, being a height-one prime ideal \cite{Hut}, is a $t$-ideal of $R$. Next, we prove that $J$ is a $t$-ideal. We first claim that
$J^{-1}=\frac{1}{x}\Z[x].$
Indeed, notice that $\Q(x)$ is the quotient field of $R$ and since $3x\subseteq J$, then $J^{-1}\subseteq \frac{1}{3x}R$. So, let $f:=\frac{g}{3x}\in J^{-1}$ where  $g=\sum^{m}_{i=0}a_{i}x^{i}\in \Z[x]$ with $a_{1}\in 3\Z$. Then the fact that $x^{3}f\in R$ implies that $a_{i}\in 3\Z$ for $i=0, 2, ..., m$; i.e., $g\in3\Z[x]$. Hence $f\in\frac{1}{x}\Z[x]$, whence $J^{-1}\subseteq\frac{1}{x}\Z[x]$. The reverse inclusion holds since
$\frac{1}{x}J\Z[x]=(3,3x,x^{2},x^{3})\Z[x]\subseteq R$,
proving the claim. Next, let $h\in (R:\Z[x])\subseteq R$. Then $xh\in R$ forcing $h(0)\in 3\Z$ and thus $h\in (3,3x,x^{2},x^{3})$. So, $(R:\Z[x])\subseteq (3,3x,x^{2},x^{3})$, hence $(R:\Z[x])=\frac{1}{x}J$. It follows that $J_{t}=J_{v}=\left(R:\frac{1}{x}\Z[x]\right)=x(R:\Z[x])=J$, as desired.

Next, let $n\in \N$. It is to see that $x^{3}x^{2n}=x^{2n+3}$ is the monic monomial with the smallest degree in $JI^{n}$. Therefore $x^{2(n+1)}=x^{2n+2}\in I^{n+1}\setminus JI^{n}$. That is, $J$ is not a reduction of $I$. It remains to prove $(JI)_{t}=(I^{2})_{t}$. We first claim that
$(JI)^{-1}=\frac{1}{x^{2}}\Z[x]$.
Indeed, $ (JI)^{-1}\subseteq (J^{-1})^{2}=\frac{1}{x^{2}}\Z[x]$
and the reverse inclusion holds since
$$\frac{1}{x^{2}}JI\Z[x]=(3,3x,x^{2},x^{3})(3,x,x^{2})\Z[x]\subseteq R$$
proving the claim. Now, observe that $I^{2}=(9x^{2},3x^{3},x^{4},x^{5})$. It follows that
$(IJ)_{t}=(IJ)_{v}=\left(R:\frac{1}{x^{2}}\Z[x]\right)=x^{2}(R:\Z[x])=xJ\supseteq  I^{2}$.
Thus $(IJ)_{t}\supseteq (I^{2})_{t}$, as desired.
\end{proof}

Observe that the domain $R$ in the above example is not integrally closed. Next, we provide a class of integrally closed domains where the notions of reduction and $t$-reduction are always distinct.

\begin{example}\label{d:exa3}
Let $R$ be any integrally closed Mori domain that is not completely integrally closed (i.e., not Krull). Then there always exist nonzero ideals $J\subsetneqq I$ in $R$ such that $J$ is a $t$-reduction but not a reduction of $I$.
\end{example}

\begin{proof}
These domains do exist; for instance, let $k\subsetneqq K$ be a field extension with $k$ algebraically closed and let $x$ be an indeterminate over $K$. Then, $R:=k + xK[x]$ is an  integrally closed Mori domain \cite[Theorem 4.18]{GH} that is not completely integrally closed \cite[Lemma 26.5]{G} (see \cite[p. 161]{FZ}).\\
Now, by \cite[Proposition 1.5(1)]{HKM}, there exists a $t$-ideal $A$ in $R$ that is not $t$-basic; say, $B\subseteq A$ is a $t$-reduction of $A$ with $B_t\subsetneqq A_t$. By \cite[Theorem 2.1]{Bar2}, there exist finitely generated ideals $F \subseteq A$ and $J \subseteq B$ such that $A^{-1}=F^{-1}$ and $B^{-1}=J^{-1}$; yielding $A_{t}=F_{t}$ and $B_{t}=J_{t}$. Let $I:= F+J$. Then, one can easily see, that $J$ is a non-trivial $t$-reduction of $I$. Finally, we claim that $J$ is not a reduction of $I$. Deny. Since $I$ is finitely generated, $I \subseteq \overline{J}$ by \cite[Corollary 1.2.5]{HS2}. But, $\overline{J}\subseteq J_{t}$ by \cite[Proposition 2.2]{Mi}. It follows that $J_{t} = I_{t}$, the desired contradiction.
\end{proof}

Another crucial fact concerns reductions of $t$-ideals. Indeed, if $J$ is a reduction of a $t$-ideal, then so is $J_{t}$; and the converse is not true, in general, as shown by the following example which features a domain $R$ with a $t$-ideal $I$ and an ideal $J\subseteq I$ such that $J_{t}$ is a reduction but $J$ is not a reduction of $I$.

\begin{example}\label{d:exa2}
Let $k$ be a field and let $x,y,z$ be indeterminates over $k$. Let $R := k[x]+M$, where $M := (y,z)k(x)[[y,z]]$ and let $J := M^{2}$. Note that $R$ is a classical pullback issued from the local Noetherian and integrally closed domain $T:=k(x)[[y,z]]$. Then $M$ is a divisorial ideal of $R$ by \cite[Corollary 5]{HKLM} and clearly, $\forall\ n\in\N$, $M^{n+2}\subsetneqq M^{n+1}$; that is, $J$ is not a reduction of $M$ in $R$. On the other hand, notice that $(M:M)=T$ (since $T$ is integrally closed) and $M$ is not principal in $T$. Therefore, by \cite[Theorem 13]{HKLM}, we have $$(R:(R:M^{2}))  =  (R:(M^{-1}:M)) =  (R:((M:M):M)) =$$ $$(R:(T:M)) =  (R:M^{-1}) =  M.$$
 So that $J_{t}=J_{v}=M$. Hence, $J_{t}$ is trivially a reduction of $M$ in $R$.
\end{example}

In the sequel, $R$ will denote a domain. For convenience, recall that, for any nonzero ideals $I,J,H$ of $R$, the equality $(IJ+H)_{t}=(I_{t}J+H)_{t}$ always holds since $I_{t}J\subseteq (I_{t}J)_{t}=(IJ)_{t}\subseteq (IJ+H)_{t}$. This property will be used in the proof of the next basic result which examines the $t$-reduction of the sum and product of ideals.

\begin{lemma}\label{b:lem2}
Let $J\subseteq I$ and $J'\subseteq I'$ be nonzero ideals of $R$. If $J$ and $J'$ are $t$-reductions of $I$ and $I'$, respectively, then $J+J'$ is a $t$-reduction of $I+I'$ and
$JJ'$ is a $t$-reduction of $II'$.
\end{lemma}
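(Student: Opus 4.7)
The plan is to first normalize both defining equations to a common exponent. Given $(JI^n)_t=(I^{n+1})_t$ and $(J'(I')^m)_t=((I')^{m+1})_t$, the first step is to check that these equalities propagate to higher powers: multiplying through by $I$ (respectively $I'$) and applying the identity $(AB)_t=(A_tB)_t$ yields $(JI^{n+k})_t=(I^{n+k+1})_t$ for all $k\geq 0$. Setting $N:=n+m$, both relations $(JI^N)_t=(I^{N+1})_t$ and $(J'(I')^N)_t=((I')^{N+1})_t$ then hold for the same exponent, which is what drives the remainder of the argument.

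For the product $JJ'\subseteq II'$, the verification reduces to a short direct manipulation:
\[
(JJ'(II')^N)_t=\bigl((JI^N)\cdot J'(I')^N\bigr)_t=\bigl((JI^N)_t\cdot J'(I')^N\bigr)_t=\bigl((I^{N+1})_t\cdot J'(I')^N\bigr)_t,
\]
and one further use of $(AB)_t=(A_tB)_t$ together with the $J'$-equality collapses this to $((II')^{N+1})_t$.

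For the sum, the key is a binomial-type observation: $(I+I')^{N+1}$ is generated as an ideal by the products $I^i(I')^j$ with $i+j=N+1$, and because $N=n+m$, any such pair must satisfy $i\geq n+1$ or $j\geq m+1$. In the first case, I would factor $I^i(I')^j=I^{n+1}\cdot I^{i-n-1}(I')^j\subseteq (JI^n)_t\cdot I^{i-n-1}(I')^j$ and then apply the inclusion $A_tB\subseteq (AB)_t$ (a direct consequence of $(AB)_t=(A_tB)_t$) to place $I^i(I')^j$ inside $(JI^{i-1}(I')^j)_t$. Since $(i-1)+j=N$, the product $JI^{i-1}(I')^j$ sits as a summand of $(J+J')(I+I')^N$, so $I^i(I')^j\subseteq\bigl((J+J')(I+I')^N\bigr)_t$. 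The case $j\geq m+1$ is handled symmetrically using $J'$.

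The reverse containment $\bigl((J+J')(I+I')^N\bigr)_t\subseteq\bigl((I+I')^{N+1}\bigr)_t$ follows at once from $J+J'\subseteq I+I'$. The main obstacle is really just bookkeeping: picking the common exponent $N=n+m$ so that every generator of $(I+I')^{N+1}$ triggers one of the two $t$-reduction substitutions, and invoking the star-operation identity $(AB)_t=(A_tB)_t$ (together with its consequence $A_tB\subseteq(AB)_t$) at exactly the right places.
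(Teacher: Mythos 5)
Your proposal is correct and follows essentially the same route as the paper: normalize both $t$-reduction equations to a common exponent, handle the product by the direct substitution $(JJ'(II')^N)_t=((JI^N)_t\,J'(I')^N)_t=\cdots=((II')^{N+1})_t$, and handle the sum by the pigeonhole observation that every generator $I^i(I')^j$ of $(I+I')^{N+1}$ has $i\geq n+1$ or $j\geq m+1$, which is exactly the idea behind the paper's containment $(I+I')^{2m+1}\subseteq I^{m+1}(I+I')^{m}+I'^{m+1}(I+I')^{m}$. The only difference is cosmetic bookkeeping (exponent $n+m+1$ versus $2m+1$, and a generator-by-generator phrasing versus a single chain of inclusions).
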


\begin{proof}
Let $n$ be a positive integer. Then the following implication always holds
\begin{equation}\label{b:lem2eq1} (JI^{n})_{t}=(I^{n+1})_{t}\ \Rightarrow\  (JI^{m})_{t}=(I^{m+1})_{t}\ \forall m\geq n.\end{equation}
Indeed, multiply the first equation through by $I^{m-n}$ and apply the $t$-closure to both sides. By (\ref{b:lem2eq1}), let $m$ be a positive integer such that
\begin{equation}\label{b:lem2eq2} (JI^{m})_{t}=(I^{m+1})_{t}\ \text{ and }\ (J'I'^{m})_{t}=(I'^{m+1})_{t}.\end{equation}
By (\ref{b:lem2eq2}), we get
$$\begin{array}{rcl}
\big((I+I')^{2m+1}\big)_{t}     &\subseteq  &\big(I^{m+1}(I+I')^{m} + I'^{m+1}(I+I')^{m}\big)_{t}\\
                                &=          &\big((I^{m+1})_{t}(I+I')^{m} + (I'^{m+1})_{t}(I+I')^{m}\big)_{t}\\
                                &=          &\big((JI^{m})_{t}(I+I')^{m} + (J'I'^{m})_{t}(I+I')^{m}\big)_{t}\\
                                &=          &\big(JI^{m}(I+I')^{m} + J'I'^{m}(I+I')^{m}\big)_{t}\\
                                &\subseteq  &\big((J+J')(I+I')^{2m}\big)_{t}\\
                                &\subseteq  &\big((I+I')^{2m+1}\big)_{t}
\end{array}$$
and then equality holds throughout, proving the first statement. The proof of the second statement is straightforward via (\ref{b:lem2eq2}).
\end{proof}

The next basic result examines the transitivity for $t$-reduction.

\begin{lemma}\label{b:lem3}
Let $K\subseteq J\subseteq I$ be nonzero ideals of $R$. Then:
\begin{enumerate}
\item If $K$ is a $t$-reduction of $J$ and $J$ is a $t$-reduction of $I$, then $K$ is a $t$-reduction of $I$.
\item If $K$ is a $t$-reduction of $I$, then $J$ is a $t$-reduction of $I$.
\end{enumerate}
\end{lemma}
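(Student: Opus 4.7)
The plan is to use the key ``climb up'' observation from the proof of Lemma \ref{b:lem2}, namely that $(JI^{n})_{t}=(I^{n+1})_{t}$ implies $(JI^{m})_{t}=(I^{m+1})_{t}$ for all $m\geq n$, together with the standard star-operation identity $(AB)_{t}=(A_{t}B)_{t}$.

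For part (2), the argument is essentially immediate. Fix $n$ with $(KI^{n})_{t}=(I^{n+1})_{t}$. Since $K\subseteq J\subseteq I$, sandwiching gives
\[
(KI^{n})_{t}\subseteq (JI^{n})_{t}\subseteq (I^{n+1})_{t},
\]
and the equality of the outer terms forces $(JI^{n})_{t}=(I^{n+1})_{t}$. So $J$ is a $t$-reduction of $I$ with the same exponent $n$.

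For part (1), I first synchronize the exponents: by the climb-up property, I can pick a single $n$ so that simultaneously $(KJ^{n})_{t}=(J^{n+1})_{t}$ and $(JI^{n})_{t}=(I^{n+1})_{t}$. The heart of the argument is the iterated identity
\[
(I^{n+k})_{t}=(J^{k}I^{n})_{t} \quad \text{for all } k\geq 1,
\]
which I would prove by induction on $k$: the base case is hypothesis, and the induction step uses $(I^{n+k+1})_{t}=(I\cdot(J^{k}I^{n})_{t})_{t}=(J^{k}I^{n+1})_{t}=(J^{k}\cdot(JI^{n})_{t})_{t}\cdots = (J^{k+1}I^{n})_{t}$, where I am freely moving the $t$-closure in and out of products. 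Applying this with $k=n+1$ gives $(I^{2n+1})_{t}=(J^{n+1}I^{n})_{t}$. Now I replace $J^{n+1}$ via the first hypothesis: $(J^{n+1}I^{n})_{t}=((KJ^{n})_{t}I^{n})_{t}=(KJ^{n}I^{n})_{t}\subseteq (KI^{2n})_{t}$. Combined with the obvious inclusion $(KI^{2n})_{t}\subseteq (I^{2n+1})_{t}$, equality must hold throughout, so $(KI^{2n})_{t}=(I^{2n+1})_{t}$ and $K$ is a $t$-reduction of $I$.

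I do not expect any real obstacle here; the only point that requires a moment's care is ensuring that the exponents from the two hypotheses can be aligned, which is precisely what the climb-up observation (\ref{b:lem2eq1}) delivers. Everything else is bookkeeping with the identity $(AB)_{t}=(A_{t}B)_{t}$.
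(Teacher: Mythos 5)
Your proof is correct and follows essentially the same route as the paper: the key identity $(J^{k}I^{n})_{t}=(I^{n+k})_{t}$ obtained by induction, followed by substituting $(KJ^{n})_{t}$ for $(J^{n+1})_{t}$ and closing the chain with a sandwich argument. The only cosmetic differences are that you synchronize the two exponents (the paper keeps separate $n$ and $m$, which works just as well) and that you spell out the final inclusion-squeeze that the paper compresses into an equality sign.
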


\begin{proof}
For any positive integer $m$, we always have
\begin{equation}\label{b:lem3eq1} (JI^{m})_{t}=(I^{m+1})_{t}\ \Rightarrow\  (J^{n}I^{m})_{t}=(I^{m+n})_{t}\ \forall n\geq 1.\end{equation}
Indeed, multiply the first equation through by $J^{n-1}$, apply the $t$-closure to both sides, and conclude by induction on $n$. Let $(KJ^{n})_{t}=(J^{n+1})_{t}$ and $(JI^{m})_{t}=(I^{m+1})_{t}$, for some positive integers $n$ and $m$. By (\ref{b:lem3eq1}), we get
$$ (I^{m+n+1})_{t} = (J^{n+1}I^{m})_{t} = \big((J^{n+1})_{t}I^{m}\big)_{t} = \big((KJ^{n})_{t}I^{m}\big)_{t} = (KI^{m+n})_{t}$$
proving (a). The proof of (b) is straightforward.
\end{proof}

 The next basic result examines the $t$-reduction of the power of an ideal.

\begin{lemma} \label{b:lem4}
Let $J\subseteq I$ be nonzero ideals of $R$ and let $n$ be a positive integer. Then:
\begin{enumerate}
\item $J$ is a $t$-reduction of $I$ $\Leftrightarrow$ $J^{n}$  is a $t$-reduction of $I^{n}$.
\item  If $J=(a_{1}, ...,a_{k})$, then: $J$ is a $t$-reduction of $I$ $\Leftrightarrow$ $(a_{1}^{n}, ...,a_{k}^{n})$  is a $t$-reduction of $I^{n}$.
\end{enumerate}
\end{lemma}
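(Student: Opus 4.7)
My plan is to derive both parts from the two scaling implications already in hand: the trivial inflation $(JI^{m})_{t}=(I^{m+1})_{t}\Rightarrow(JI^{m+k})_{t}=(I^{m+k+1})_{t}$ (multiply by $I^{k}$ and apply the $t$-closure), together with the power upgrade $(JI^{m})_{t}=(I^{m+1})_{t}\Rightarrow (J^{s}I^{m})_{t}=(I^{m+s})_{t}$ established in (\ref{b:lem3eq1}).

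For part (a), forward direction: starting from $(JI^{m})_{t}=(I^{m+1})_{t}$, the power upgrade with $s=n$ yields $(J^{n}I^{m})_{t}=(I^{m+n})_{t}$, and then the trivial inflation lets me replace $m$ by some $m+k$ with $m+k=pn$, giving $(J^{n}(I^{n})^{p})_{t}=((I^{n})^{p+1})_{t}$ and hence $J^{n}$ as a $t$-reduction of $I^{n}$. For the converse, from $(J^{n}I^{pn})_{t}=(I^{(p+1)n})_{t}$, the containment $J^{n}\subseteq JI^{n-1}$ (immediate from $J\subseteq I$) produces the sandwich $(I^{(p+1)n})_{t}=(J^{n}I^{pn})_{t}\subseteq (JI^{(p+1)n-1})_{t}\subseteq (I^{(p+1)n})_{t}$, which forces $(JI^{(p+1)n-1})_{t}=(I^{(p+1)n})_{t}$ and identifies $J$ as a $t$-reduction of $I$.

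For part (b), set $J':=(a_{1}^{n},\ldots,a_{k}^{n})$. The key combinatorial step is a pigeonhole count: every monomial generator of $(J^{n})^{k}=J^{kn}$ has the form $a_{1}^{e_{1}}\cdots a_{k}^{e_{k}}$ with $\sum_{i}e_{i}=kn$, so some $e_{i}\geq n$, and the generator factors as $a_{i}^{n}$ times a monomial of total degree $(k-1)n$, i.e.\ an element of $(J^{n})^{k-1}$. This gives the ordinary reduction equality $J'\,(J^{n})^{k-1}=(J^{n})^{k}$, so $J'$ is in particular a $t$-reduction of $J^{n}$. The forward direction of (b) then follows by combining part (a) (which makes $J^{n}$ a $t$-reduction of $I^{n}$) with the transitivity in Lemma \ref{b:lem3}(a). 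The converse uses $J'\subseteq J^{n}\subseteq I^{n}$ and Lemma \ref{b:lem3}(b) to promote $J^{n}$ to a $t$-reduction of $I^{n}$, whereupon part (a) returns $J$ as a $t$-reduction of $I$.

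The main bookkeeping obstacle is the index translation in part (a), where the exponent of $I$ has to be aligned with a multiple of $n$ before repackaging as a power of $I^{n}$; once this is handled cleanly, the pigeonhole argument in (b) is essentially routine and the transitivity lemma closes both implications in a single step.
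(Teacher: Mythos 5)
Your proof is correct and follows essentially the same route as the paper: the converse of (a) via the sandwich coming from $J^{n}\subseteq JI^{n-1}$, and (b) via the genuine reduction equality $(a_{1}^{n},\ldots,a_{k}^{n})J^{(k-1)n}=J^{kn}$ combined with part (a) and the transitivity of Lemma~\ref{b:lem3}. The only cosmetic differences are that you prove that monomial equality by a direct pigeonhole count where the paper cites the identity (8.1.6) of Huneke--Swanson and multiplies through by $J^{k-1}$, and you handle the forward direction of (a) by explicit index alignment rather than invoking Lemma~\ref{b:lem2}; both variants are valid.
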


\begin{proof}
(a) The ``only if" implication holds by Lemma~\ref{b:lem2}. For the converse, suppose $(J^{n}I^{nm})_{t}=(I^{nm+n})_{t}$ for some positive integer $m$. Then
$$(I^{nm+n})_{t}=(JJ^{n-1}I^{nm})_{t}\subseteq (JI^{nm+n-1})_{t} \subseteq (I^{nm+n})_{t}$$
and so equality holds throughout, as desired.

(b) Assume that $J$ is a $t$-reduction of $I$. From \cite[(8.1.6)]{HS2}, we always have the following equality
\begin{equation}\label{b:lem4eq1} \big(a_{1}^{n}, ...,a_{k}^{n}\big)\big(a_{1}, ...,a_{k}\big)^{(k-1)(n-1)}=\big(a_{1}, ...,a_{k}\big)^{(n-1)k+1} \end{equation}
and, multiplying (\ref{b:lem4eq1}) through by $J^{k-1}$, we get $\big(a_{1}^{n}, ...,a_{k}^{n}\big)J^{nk-n}=J^{nk}$. Therefore $(a_{1}^{n}, ...,a_{k}^{n})$ is a $t$-reduction of $J^{n}$ and a fortiori of $I^{n}$ by (a) and Proposition~\ref{b:lem3}. The converse holds by (a) and Proposition~\ref{b:lem3}.
\end{proof}

The next basic result examines the $t$-reduction of localizations.

\begin{lemma}\label{b:lem5}
Let $J\subseteq I$ be nonzero ideals of $R$ and let $S$ be a multiplicatively closed subset of $R$. If $J$ is a $t$-reduction of $I$, then
$S^{-1}J$ is a $t$-reduction of $S^{-1}I$.
\end{lemma}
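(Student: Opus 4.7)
The plan is to reduce the claim, via the identities $(S^{-1}J)(S^{-1}I)^{n}=S^{-1}(JI^{n})$ and $(S^{-1}I)^{n+1}=S^{-1}(I^{n+1})$, to showing
$$\bigl(S^{-1}(JI^{n})\bigr)_{t}=\bigl(S^{-1}(I^{n+1})\bigr)_{t}$$
in $S^{-1}R$, where $n$ is any integer witnessing $(JI^{n})_{t}=(I^{n+1})_{t}$. The inclusion $\subseteq$ is automatic from $S^{-1}(JI^{n})\subseteq S^{-1}(I^{n+1})$, so all the work lies in the reverse inclusion.

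For that inclusion, I would fix $y\in I^{n+1}$ and $s\in S$ and show $y/s\in\bigl(S^{-1}(JI^{n})\bigr)_{t}$. Since $y\in (I^{n+1})_{t}=(JI^{n})_{t}$, the definition of the $t$-operation produces a finitely generated ideal $F\subseteq JI^{n}$ with $y\in F_{v}$, i.e., $y(R:F)\subseteq R$. The localization $S^{-1}F$ is still finitely generated and sits inside $S^{-1}(JI^{n})$, so $(S^{-1}F)_{v}\subseteq\bigl(S^{-1}(JI^{n})\bigr)_{t}$, and it suffices to prove $y/s\in (S^{-1}F)_{v}$.

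The key technical input is the colon-ideal identity
$$\bigl(S^{-1}R:S^{-1}F\bigr)=S^{-1}(R:F)$$
for every finitely generated fractional ideal $F$. The inclusion $\supseteq$ is immediate; for $\subseteq$, I would write $F=(a_{1},\ldots,a_{k})$ and clear denominators: if $x/s'\in(S^{-1}R:S^{-1}F)$, then for each $i$ some $t_{i}\in S$ gives $t_{i}xa_{i}\in R$, so $t:=t_{1}\cdots t_{k}$ satisfies $tx\in(R:F)$, whence $x/s'\in S^{-1}(R:F)$. Granted this, a direct calculation gives
$$(S^{-1}F)_{v}=\bigl(S^{-1}R:(S^{-1}R:S^{-1}F)\bigr)=\bigl(S^{-1}R:S^{-1}(R:F)\bigr)\supseteq S^{-1}(F_{v}),$$
so in particular $y/s\in S^{-1}(F_{v})\subseteq (S^{-1}F)_{v}$, finishing the argument.

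I expect the only real obstacle to be verifying the colon-ideal identity $(S^{-1}R:S^{-1}F)=S^{-1}(R:F)$; this is where the finite generation of $F$ (supplied by the definition of the $t$-operation, not by any assumption on $I$ or $J$) is essential, and it is precisely what makes the $v$-closure pass through localization in the direction needed.
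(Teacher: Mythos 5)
Your proposal is correct and takes essentially the same route as the paper: both reduce the lemma to the compatibility of the $t$-operation with localization, namely $\big(S^{-1}(A_{t})\big)_{t_{1}}=\big(S^{-1}A\big)_{t_{1}}$ applied to $A=JI^{n}$ and $A=I^{n+1}$. The only difference is that the paper cites this fact from Kang's work, whereas you prove it from scratch via the colon identity $(S^{-1}R:S^{-1}F)=S^{-1}(R:F)$ for finitely generated $F$; that self-contained argument is sound.
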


\begin{proof}
Assume that $(JI^{n})_{t}=(I^{n+1})_{t}$ for some positive integer $n$. Let $t_{1}$ denote the $t$-operation with respect to $S^{-1}R$. By \cite[Lemma 3.4]{Kg2}, we have:\\
$((S^{-1}I)^{n+1})_{t_{1}}=(S^{-1}(I^{n+1}))_{t_{1}} = (S^{-1}((I^{n+1})_{t}))_{t_{1}} = (S^{-1}((JI^{n})_{t}))_{t_{1}} = (S^{-1}(JI^{n}))_{t_{1}}\\ = ((S^{-1}J)(S^{-1}I)^{n})_{t_{1}}$.
\end{proof}

It is worthwhile noting here that, in a P$v$MD, $J$ is a $t$-reduction of $I$ if and only if $J$ is $t$-locally a reduction of $I$; i.e.,  $JR_{M}$ is a reduction of $IR_{M}$ for every maximal $t$-ideal $M$ of $R$ \cite[Lemma 2.2]{HKM}.

\section{$t$-Integral closure of ideals}\label{p}

\noindent This section investigates the concept of $t$-integral closure of ideals and its correlation with $t$-reductions. Our objective is to establish satisfactory $t$-analogues of (and in some cases generalize) well-known results, in the literature, on the integral closure of ideals and its correlation with reductions.

\begin{definition}
Let $R$ be a domain and $I$ a nonzero ideal of $R$. An element $x\in R$ is $t$-integral over $I$ if there is an equation $$x^{n}+a_{1}x^{n-1}+...+a_{n-1}x+a_{n}=0\ \mbox{ with }\ a_{i}\in (I^{i})_{t}\ \forall i=1,...,n.$$ The set of all elements that are $t$-integral over $I$ is called the $t$-integral closure of $I$, and is denoted by $\widetilde{I}$. If $I=\widetilde{I}$, then $I$ is called $t$-integrally closed.
\end{definition}

 Notice that the $t$-integral closure of the ideal $R$ is always $R$, whereas the $t$-integral closure of the ring $R$ (also called pseudo-integral closure) may be larger than $R$; e.g., consider any non $v$-domain \cite{AHZ,FZ}. Also, we have  $J\subseteq I \Rightarrow \widetilde{J}\subseteq \widetilde{I}$. More ideal-theoretic properties are provided in Remark~\ref{p:rem1}.

It is well-known that the integral closure of an ideal is an ideal which is integrally closed \cite[Corollary 1.3.1]{HS2}. Next, we establish a $t$-analogue for this result.

\begin{thm}\label{p:main1}
The $t$-integral closure of an ideal is an integrally closed ideal. In general, it is not $t$-closed and, a fortiori, not $t$-integrally closed.
\end{thm}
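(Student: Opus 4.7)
The plan is to transpose the classical Rees-algebra proof of \cite[Corollary~1.3.1]{HS2} to the $t$-setting. I introduce the graded $R$-subalgebra
\[
A \ :=\ \bigoplus_{n\geq 0}(I^{n})_{t}T^{n}\ \subseteq\ R[T]
\]
(with the convention $(I^{0})_{t}=R$); closure under multiplication follows from the identity $(JK)_{t}=(J_{t}K_{t})_{t}$ recalled in Section~\ref{d}, which yields $(I^{n})_{t}(I^{m})_{t}\subseteq(I^{n+m})_{t}$. Let $B$ denote the integral closure of $A$ in $R[T]$. The linchpin is the identification
\[
\widetilde{I}\ =\ \{y\in R\,:\,yT\in B\}.
\]
The inclusion $\subseteq$ is immediate: multiplying a $t$-integrality equation $y^{m}+\sum_{j=1}^{m}a_{j}y^{m-j}=0$ with $a_{j}\in(I^{j})_{t}$ by $T^{m}$ produces an integrality relation for $yT$ over $A$. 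For the reverse, starting from $(yT)^{m}+\sum_{j=1}^{m}\alpha_{j}(yT)^{m-j}=0$ with $\alpha_{j}\in A$, I decompose $\alpha_{j}=\sum_{k\geq 0}\alpha_{j,k}T^{k}$ (with $\alpha_{j,k}\in(I^{k})_{t}$) and extract the coefficient of $T^{m}$, obtaining the $t$-integrality relation $y^{m}+\sum_{j=1}^{m}\alpha_{j,j}y^{m-j}=0$ for $y$ over $I$.

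From this characterization, $\widetilde{I}\,T=RT\cap B$ is an $R$-submodule of $R[T]$ because $B$ is a subring containing $R$; hence $\widetilde{I}$ is an ideal of $R$. To show it is (ordinarily) integrally closed, I assume $y\in R$ satisfies $y^{n}+b_{1}y^{n-1}+\cdots+b_{n}=0$ with $b_{i}\in\widetilde{I}^{\,i}$. Each factor of each monomial summand of $b_{i}$ lies in $\widetilde{I}$, hence, multiplied by $T$, in $B$, so $b_{i}T^{i}\in B$. Multiplying the equation by $T^{n}$ exhibits $yT$ as integral over $B$; by transitivity of integral dependence, $yT$ is integral over $A$, whence $yT\in B$ and the characterization yields $y\in\widetilde{I}$.

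For the remaining assertion, I must produce a domain $R$ and a nonzero ideal $I$ with $(\widetilde{I})_{t}\supsetneq\widetilde{I}$. This is where the main obstacle lies: since $\widetilde{I}$ is always ordinarily integrally closed (by the first part), the task reduces to finding an integrally closed ideal that fails to be divisorial in the ambient domain. A pullback construction in the vein of Examples~\ref{d:exa1}--\ref{d:exa2}, where the $t$-operation and ordinary integral closure can be tracked side by side, is the natural source; the delicate step will be carrying out the computation so that the strict containment $\widetilde{I}\subsetneq(\widetilde{I})_{t}$ can be verified directly.
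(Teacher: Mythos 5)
Your treatment of the first assertion is correct and is essentially the paper's own argument: your algebra $A=\bigoplus_{n\geq 0}(I^{n})_{t}T^{n}$ is exactly the $t$-Rees algebra $R_{t}[Ix]$ of Lemma~\ref{p:lem1}, and your identification $\widetilde{I}=\{y\in R: yT\in B\}$ is the degree-one case of that lemma, proved the same way (extract the coefficient of $T^{m}$ one way, multiply the $t$-integrality equation by $T^{m}$ the other way). Two small variations are worth noting. You get ``$\widetilde{I}$ is an ideal'' from $\widetilde{I}\,T=RT\cap B$ being an intersection of $R$-submodules, which is a clean packaging of the paper's ``$ax+bx=(a+b)x\in\overline{R_{t}[Ix]}$'' step. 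For integral closedness, you argue by transitivity of integral dependence ($yT$ integral over $B$, $B$ integral over $A$, hence $yT\in B$), which is slightly more self-contained than the paper's route through the ordinary Rees algebra $R[\widetilde{I}x]$ and \cite[Proposition 5.2.1]{HS2}; both are valid, and yours avoids quoting the graded description of $\overline{R[\widetilde{I}x]}$.

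The second assertion, however, is left unproved, and this is a genuine gap rather than a routine verification you may defer: the theorem claims that $\widetilde{I}\subsetneqq(\widetilde{I})_{t}$ can actually occur, and without an explicit witness nothing has been established. You correctly observe that, since $\widetilde{I}$ is always integrally closed, one must find an integrally closed ideal of this form that fails to be divisorial, but you stop at the point where the work begins. The paper's witness is Example~\ref{p:example2}: in $R=\Z+x\Q(\sqrt{2})[x]$ with $I=(\tfrac{x}{\sqrt{2}})$, one computes $\big(I(I+aR)\big)^{-1}=\tfrac{1}{x}\Q(\sqrt{2})[x]$ for $a=\tfrac{x}{2}$, deduces $a\notin\widetilde{I}=\overline{I}$ (here $\widetilde{I}=\overline{I}$ because $I$ is principal, so $(I^{n})_{t}=I^{n}$), and then shows $a\in A_{v}\subseteq(\widetilde{I})_{t}$ for the finitely generated subideal $A=(x,\tfrac{x}{\sqrt{2}})\subseteq\widetilde{I}$. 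Note also that your reduction ``not $t$-closed $\Rightarrow$ not $t$-integrally closed'' silently uses the containment $(\widetilde{I})_{t}\subseteq\widetilde{\widetilde{I}}$ (any element of $J_{t}$ satisfies the degree-one equation $z-z=0$ with $z\in(J^{1})_{t}$); this is easy but should be said.
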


The proof of this theorem relies on the following lemma which sets a $t$-analogue for the notion of Rees algebra of an ideal \cite[Chapter 5]{HS2}. Recall, for convenience, that the Rees algebra of an ideal $I$ (in a ring $R$) is the graded subring of $R[x]$ given by
$R[Ix] := \bigoplus_{n \geq 0} ~I^n~x^n$
 \cite[Definition 5.1.1]{HS2} and whose integral closure in $R[x]$ is the graded ring
$\bigoplus_{n \geq 0} ~\overline{I^n}~x^n$ \cite[Proposition 5.2.1]{HS2}.

\begin{lemma}\label{p:lem1}
Let $R$ be a domain, $I$ a $t$-ideal of $R$, and $x$ an indeterminate over $R$. Let $R_{t}[Ix] := \bigoplus_{n \geq 0} (I^n)_{t}x^n$. Then
$R_{t}[Ix]$ is a graded subring of $R[x]$ and its integral closure in $R[x]$ is the graded ring $\bigoplus_{n \geq 0} \widetilde{I^n}x^n.$
\end{lemma}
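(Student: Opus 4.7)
The plan is to first verify that $R_{t}[Ix]$ is a graded subring of $R[x]$, and then prove each inclusion of the stated identification of the integral closure.

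I would first observe that $R_{t}[Ix]$ is closed under multiplication via the star operation identity $(A_{t}B)_{t} = (AB)_{t}$ recalled just before Lemma~\ref{b:lem2}: for $a \in (I^{n})_{t}$ and $b \in (I^{m})_{t}$, one has $ab \in (I^{n})_{t}(I^{m})_{t} \subseteq ((I^{n})_{t}(I^{m})_{t})_{t} = (I^{n+m})_{t}$, so $(ax^{n})(bx^{m}) = ab\,x^{n+m}$ lies in the $(n+m)$-th graded piece. Since $(I^{0})_{t} = R$, it is a unital graded subring of $R[x]$.

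Next, for the easy inclusion $\bigoplus_{n \geq 0}\widetilde{I^{n}}x^{n} \subseteq$ (integral closure of $R_{t}[Ix]$ in $R[x]$), I would treat the monomial case first and then invoke the ring structure of the integral closure. Given $a \in \widetilde{I^{n}}$ satisfying $a^{k}+c_{1}a^{k-1}+\cdots+c_{k}=0$ with $c_{i} \in ((I^{n})^{i})_{t} = (I^{ni})_{t}$, multiplying through by $x^{nk}$ produces $(ax^{n})^{k}+(c_{1}x^{n})(ax^{n})^{k-1}+\cdots+(c_{k}x^{nk})=0$, where each coefficient $c_{i}x^{ni}$ belongs to $(I^{ni})_{t}x^{ni} \subseteq R_{t}[Ix]$; so $ax^{n}$ is integral. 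Any finite sum $\sum_{n}a_{n}x^{n}$ with $a_{n} \in \widetilde{I^{n}}$ is then integral because the set of integral elements forms a subring.

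For the reverse inclusion, I would use the principle that the integral closure of a graded subring inside a graded overring is itself graded. Granting this, any $f = \sum_{n=0}^{d}a_{n}x^{n} \in R[x]$ integral over $R_{t}[Ix]$ has each homogeneous component $a_{n}x^{n}$ integral over $R_{t}[Ix]$. For a homogeneous integral element with equation $(ax^{n})^{k}+h_{1}(ax^{n})^{k-1}+\cdots+h_{k}=0$, $h_{i} \in R_{t}[Ix]$, I would write $h_{i}=\sum_{j}c_{ij}x^{j}$ with $c_{ij} \in (I^{j})_{t}$ and extract the coefficient of $x^{nk}$ from the left-hand side; only the pieces $c_{i,ni}x^{ni}$ contribute, yielding $a^{k}+c_{1,n}a^{k-1}+\cdots+c_{k,nk}=0$ with $c_{i,ni} \in (I^{ni})_{t} = ((I^{n})^{i})_{t}$, which exhibits $a \in \widetilde{I^{n}}$.

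The main obstacle is the gradedness of the integral closure. I would handle it by the classical Vandermonde-automorphism technique: pass to $R[x,t,t^{-1}]$ and use the $R[t,t^{-1}]$-automorphism $\sigma:x \mapsto tx$, which preserves $R_{t}[Ix][t,t^{-1}]$ because $\sigma((I^{n})_{t}x^{n}) = t^{n}(I^{n})_{t}x^{n}$. Applying $\sigma$ to an integral equation for $f$ shows that $\sigma(f) = \sum_{n}a_{n}t^{n}x^{n}$ is integral over $R_{t}[Ix][t,t^{-1}]$; then specializing $t$ at $\deg(f)+1$ distinct nonzero scalars and inverting the Vandermonde matrix recovers each $a_{n}x^{n}$ as a linear combination of integral elements (some care is needed with the scalars to ensure the Vandermonde determinant is a unit, e.g.\ by first extending to the fraction field of $R$).
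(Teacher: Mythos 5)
Your proof is correct and follows essentially the same route as the paper: verify the subring property via $(I^i)_t(I^j)_t\subseteq (I^{i+j})_t$, get one inclusion by multiplying a $t$-integral equation through by $x^{nk}$, and get the other by extracting the degree-$nk$ coefficient after invoking gradedness of the integral closure. The only difference is that the paper simply cites \cite[Theorem 2.3.2]{HS2} for that gradedness, whereas you sketch its standard Vandermonde/automorphism proof; both are fine.
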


\begin{proof}
That $R_{t}[Ix]$ is $\N$-graded follows from the fact that $(I^i)_{t} \cdot(I^j)_{t} \subseteq (I^{i+j})_t, \forall i,j \in \N $. Let $\overline{R_{t}[Ix]}$ denote its integral closure in $R[x]$. By \cite[Theorem 2.3.2]{HS2}, $\overline{R_{t}[Ix]}$ is an $\N$-graded ring. Let  $k \in \N$ and let $S_k$ denote the homogeneous component of $\overline{R_{t}[Ix]}$ of degree $k$. We shall prove that $S_{k}= \widetilde{I^k} x^k$. Let $s:=s_{k}x^k\in S_k$, for some $s_k \in R$. Then, $s^n + a_{1}s^{n-1}+ \dots + a_n = 0$ for some positive integer $n$ and $a_i \in R_{t}[Ix]$, $i=1,\dots,n$. Expanding each $a_i = \sum_{j=0}^{k_{i}} a_{i,j}x^j$ with $a_{i,j} \in (I^j)_t$, the coefficient of the monomial of degree $kn$ in the above equation is $s_{k}^{n}+\sum_{i=1}^{n} a_{i,ik}s_{k}^{n-i}=0$, with $a_{i,ik} \in (I^{ik})_t$. It follows that $s_k \in \widetilde{I^{k}}$ and thus $S_{k} \subseteq \widetilde{I^k}x^k$. For the reverse inclusion, let $z_{k}:= y_{k} x^k\in \widetilde{I^k}x^k$, for some $y_{k}\in \widetilde{I^k}$. Then, $y_{k}^n + a_1 y_{k}^{n-1}+\dots+ a_n = 0$ for some positive integer $n$ and $a_j \in (I^{kj})_{t}$, $j=1,\dots,n$. Multiplying through by $x^{kn}$ yields the equation $z_{k}^n + a_1 x^{k} z_{k}^{n-1}+\dots+ a_n x^{kn} = 0$, with $a_{j}x^{kj} \in (I^{kj})_{t} x^{kj} \subseteq R_{t}[Ix]$, $j=1,\dots,n$. That is, $z_{k} \in \overline{R_{t}[Ix]}$. But $z_{k}$ is homogeneous of degree $k$ in $\overline{R_{t}[Ix]}$. Therefore $z_{k} \in S_{k}$ and hence $\widetilde{I^k} x^k\subseteq S_{k}$, completing the proof of the lemma.
\end{proof}

\begin{definition}
The $t$-Rees algebra of an ideal $I$ (in a domain $R$) is the graded subring of $R[x]$ given by $R_{t}[Ix] := \bigoplus_{n \geq 0} (I^n)_{t}x^n$.
\end{definition}

\begin{proof}[Proof of Theorem~\ref{p:main1}]
Let $R$ be a domain and $I$ a nonzero ideal of $R$. Since $\widetilde{I}=\widetilde{I_{t}}$, we may assume $I$ to be a $t$-ideal. We first prove that $\widetilde{I}$ is an ideal. Clearly, $\widetilde{I}$ is closed under multiplication. Next, we show that $\widetilde{I}$ is closed under addition. Let $a,b \in \widetilde{I}$. Then, by Lemma \ref{p:lem1}, $ax$ and $bx \in \overline{R_{t}[Ix]}$. Hence, $ax+bx = (a+b)x \in \overline{R_{t}[Ix]}$. Again, by Lemma \ref{p:lem1}, $a+b \in \widetilde{I}$, as desired. Next, we prove that $\widetilde{I}$ is integrally closed. For this purpose, observe that, $\forall n \in \N $, $(S_{1})^{n} \subseteq S_{n}$, forcing
\begin{equation}\label{power}\left(~\widetilde{I}~\right)^{n} \subseteq \widetilde{I^n}\ \forall n \in \N.\end{equation}
Consider the Rees algebra of the ideal $\widetilde{I}$, $R[\widetilde{I}x] = \bigoplus_{n \geq 0} \left(~\widetilde{I}~\right)^{n}x^n$. Therefore $R[\widetilde{I}x] \subseteq \overline{R_{t}[Ix]}$ and hence $\overline{R[\widetilde{I}x]} \subseteq \overline{R_{t}[Ix]}$. Now, a combination of Lemma~\ref{p:lem1} and \cite[Proposition 5.2.1]{HS2} yields $\bigoplus_{n \geq 0} \overline{\left(~\widetilde{I}~\right)^{n}}x^n \subseteq \bigoplus_{n \geq 0} \widetilde{I^{n}}x^n$. In particular, $\overline{~\widetilde{I}~} \subseteq \widetilde{I}$; that is, $\widetilde{I}$ is integrally closed. The proof of the last statement of the theorem is handled by Example~\ref{p:example2}(b), where we provide a domain with an ideal $I$ such that $\widetilde{I}\subsetneqq (~\widetilde{I}~)_{t}$. That is, $\widetilde{I}$ is not a $t$-ideal and, hence, not $t$-integrally closed since $(~\widetilde{I}~)_{t}\subseteq \widetilde{\widetilde{I}}$ always holds.
\end{proof}

The next result shows that the $t$-integral closure collapses to the $t$-closure in the class of integrally closed domains. It also completes two existing results in the literature on the integral closure of ideals (Gilmer \cite{G} and Mimouni \cite{Mi}).

\begin{thm}\label{p:main2}
Let $R$ be a domain. The following assertions are equivalent:
\begin{enumerate}
\item $R$ is integrally closed;
\item Every principal ideal of $R$ is integrally closed;
\item Every $t$-ideal of $R$ is integrally closed;
\item $\overline{I}\subseteq I_{t}$ for each nonzero ideal $I$ of $R$;
\item Every principal ideal of $R$ is $t$-integrally closed;
\item Every $t$-ideal of $R$ is $t$-integrally closed;
\item $\widetilde{I}=I_{t}$ for each nonzero ideal $I$ of $R$.
\end{enumerate}
\end{thm}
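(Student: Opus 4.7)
The plan is to establish the equivalence via two loops meeting at condition (a). First I would dispatch the classical part $(a) \Leftrightarrow (b) \Leftrightarrow (c) \Leftrightarrow (d)$ by invoking Gilmer's result that $R$ is integrally closed iff every principal ideal is integrally closed (for $(b) \Rightarrow (a)$), together with Mimouni's inclusion $\overline{I} \subseteq I_{t}$ from \cite[Proposition 2.2]{Mi} (for $(a) \Rightarrow (d)$). The two remaining links are trivial: $(d) \Rightarrow (c)$ reduces to $\overline{I} \subseteq I_{t} = I$ when $I$ is a $t$-ideal, and $(c) \Rightarrow (b)$ follows from the fact that every principal ideal is a $t$-ideal since $(a)_{v} = (a)$ for nonzero $a \in R$.

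For the $t$-integral closure conditions I would close the loop via $(a) \Rightarrow (g) \Rightarrow (f) \Rightarrow (e) \Rightarrow (b)$. Three of these are easy: $(g) \Rightarrow (f)$ is immediate since for a $t$-ideal $I$, $\widetilde{I} = I_{t} = I$; $(f) \Rightarrow (e)$ uses again that principal ideals are $t$-ideals; and $(e) \Rightarrow (b)$ follows from the always-valid inclusion $\overline{I} \subseteq \widetilde{I}$ (because $I^{i} \subseteq (I^{i})_{t}$, so any ordinary integral equation is also a $t$-integral equation), applied to $I = (a)$.

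The hard part will be $(a) \Rightarrow (g)$, which asserts that $\widetilde{I} \subseteq I_{t}$ whenever $R$ is integrally closed (the reverse inclusion is trivial via the equation $x + (-x) = 0$). My approach would be the following. Start with $x \in \widetilde{I}$ satisfying $x^{n} + a_{1} x^{n-1} + \cdots + a_{n} = 0$ with $a_{i} \in (I^{i})_{t}$. Using $(I^{i})_{t} = \bigcup F_{v}$ over finitely generated $F \subseteq I^{i}$, choose a finitely generated $F_{i} \subseteq I^{i}$ with $a_{i} \in (F_{i})_{v}$; then expand each generator of $F_{i}$ as a finite sum of products of $i$ elements of $I$ and collect all the resulting factors into a single nonzero finitely generated ideal $F \subseteq I$, which forces $F_{i} \subseteq F^{i}$ and hence $a_{i} \in (F^{i})_{v}$ simultaneously for every $i$. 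The decisive step is then to fix an arbitrary $y \in F^{-1}$ and multiply the monic equation through by $y^{n}$, obtaining
\begin{equation*}
(xy)^{n} + (a_{1} y)(xy)^{n-1} + (a_{2} y^{2})(xy)^{n-2} + \cdots + (a_{n} y^{n}) = 0.
\end{equation*}
Since $y^{i} \in (F^{-1})^{i} \subseteq (F^{i})^{-1}$, each coefficient $a_{i} y^{i}$ lies in $(F^{i})_{v}(F^{i})^{-1} \subseteq R$, so $xy$ is integral over $R$ and hence in $R$ by (a). Letting $y$ vary in $F^{-1}$ yields $xF^{-1} \subseteq R$, i.e., $x \in F_{v} \subseteq I_{t}$, as required. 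The main technical point is precisely the simultaneous construction of $F$ absorbing all the $F_{i}$, combined with the multiplication trick that uses the very definition of the $v$-operation to turn $t$-integrality over $I$ into ordinary integrality over $R$.
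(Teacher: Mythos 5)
Your proposal is correct and follows essentially the same route as the paper: the classical equivalences are delegated to Gilmer and Mimouni, the loop $(a)\Rightarrow(g)\Rightarrow(f)\Rightarrow(e)\Rightarrow(b)$ is closed the same way, and your proof of $(a)\Rightarrow(g)$ — reducing to a single finitely generated $F\subseteq I$ with $a_i\in(F^i)_v$ for all $i$, then multiplying the monic equation by $y^n$ for $y\in F^{-1}$ to land the coefficients in $(F^i)_v(F^i)^{-1}\subseteq R$ and invoke integral closedness — is exactly the paper's two-claim argument, merely phrased as $xF^{-1}\subseteq R$ rather than as $(\,\widetilde{F}\,)^{-1}=F^{-1}$.
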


\begin{proof}
(a) $\Leftrightarrow$ (b) and (a) $\Leftrightarrow$ (c) $\Leftrightarrow$ (d) are handled by \cite[Lemma 24.6]{G} and \cite[Proposition 2.2]{Mi}, respectively. Also,  (g) $\Leftrightarrow$ (f) $\Rightarrow$ (e) $\Rightarrow$ (b) are straightforward. So, it remains to prove (a) $\Rightarrow$ (g). Assume $R$ is integrally closed and let $I$ be a nonzero ideal of $R$. The inclusion $I_{t} \subseteq \widetilde{I}$ holds in any domain. Next, let $\alpha \in \widetilde{I}$.

\begin{claim} There exists a finitely generated ideal $J \subseteq I$ such that $\alpha \in \widetilde{J}$. \end{claim}
Indeed, $\alpha$ satisfies an equation of the form $\alpha^{n}+a_{1}\alpha^{n-1}+...+a_{n}=0$ with $a_{i}\in (I^{i})_{t}\ \forall i=1,...,n$. Now, let $i\in\{1,..., n\}$. Hence, there exists a finitely generated ideal $F_{i}\subseteq I^{i}$ such that $a_{i}\in {F_{i}}_{v}$. Further, each generator of $F_{i}$ is a finite combination of elements of the form $\prod_{1\leq j\leq i} c_{j}\in I^{i}$. Let $J$ denote the subideal of $I$ generated by all $c_{j}$'s emanating from all $F_{i}$'s. Clearly, $a_{i}\in (J^{i})_{t}\ \forall i=1,...,n$. That is,  $\alpha \in \widetilde{J}$, proving the claim.

\begin{claim} $\widetilde{J} \subseteq J_{t}$. \end{claim}
Indeed, we first prove that $J^{-1} = (~\widetilde{J}~)^{-1}$. Clearly, $(~\widetilde{J}~)^{-1} \subseteq J^{-1}$. For the reverse inclusion, let $x \in J^{-1}$ and $y \in \widetilde{J}$. Then $y$ satisfies an equation of the form $y^{n}+a_{1}y^{n-1}+...+a_{n}=0$ with $a_{i}\in (J^{i})_{t}\ \forall i=1,...,n$. It follows that $(yx)^{n} + a_1 x (yx)^{n-1}+\dots+ a_n x^{n} = 0$ with $a_{i} x^{i} \in (J^{i})_{t}(J^{-1})^{i} \subseteq (J^{i})_{t} (J^{i})^{-1} = (J^{i})_{t} ((J^{i})_{t})^{-1} \subseteq R$. Hence $yx \in R$. Thus, $x \in (~\widetilde{J}~)^{-1}$, as desired. Therefore, $\widetilde{J} \subseteq (~\widetilde{J}~)_{v}= J_{v}=J_{t}$, proving the claim.

Now, by the above claims, we have $\alpha \in \widetilde{J}\subseteq J_{t}\subseteq I_{t}$. Consequently, $\widetilde{I}=I_{t}$, completing the proof of the theorem.
\end{proof}

In case all ideals of a domain are $t$-integrally closed, then it must be Pr\"ufer. This is a well-known result in the literature:

\begin{corollary}[{\cite[Theorem 24.7]{G}}]\label{p:cor-main2}
A domain $R$ is Pr\"ufer if and only if every ideal of $R$ is ($t$-)integrally closed.
\end{corollary}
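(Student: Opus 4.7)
The plan is to establish the three-way equivalence among (i) $R$ is Pr\"ufer, (ii) every nonzero ideal of $R$ is integrally closed, and (iii) every nonzero ideal of $R$ is $t$-integrally closed. The equivalence (i)$\Leftrightarrow$(ii) is already Gilmer's Theorem 24.7 \cite[Theorem 24.7]{G}, so the real work is to insert (iii) into the chain, using Theorem~\ref{p:main2} as the main tool and the comparison between $\overline{I}$ and $\widetilde{I}$.

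For (iii)$\Rightarrow$(ii), I would exploit the trivial but decisive inclusion
$$\overline{I}\subseteq \widetilde{I}$$
valid in any domain, which follows just from comparing the two integrality equations: an equation $x^n+a_1x^{n-1}+\cdots+a_n=0$ with $a_i\in I^i$ is also an equation with $a_i\in(I^i)_t$, since $I^i\subseteq (I^i)_t$. Consequently, if $I=\widetilde{I}$ for every nonzero ideal $I$, then $\overline{I}\subseteq\widetilde{I}=I$, so every nonzero ideal is integrally closed, and Gilmer's result yields that $R$ is Pr\"ufer.

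For (i)$\Rightarrow$(iii), the strategy is to invoke Theorem~\ref{p:main2} in the form $\widetilde{I}=I_t$ (valid because every Pr\"ufer domain is integrally closed) and then to reduce $I_t$ to $I$. The reduction step rests on the standard fact that in a Pr\"ufer domain every nonzero ideal is a $t$-ideal: indeed, each nonzero finitely generated ideal $F$ is invertible, hence divisorial, so $F_v=F$, and taking the union over finitely generated subideals of $I$ gives $I_t=\bigcup F_v=\bigcup F=I$. Combined with $\widetilde{I}=I_t$, this yields $\widetilde{I}=I$ for every nonzero ideal $I$, which is (iii).

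I do not expect a serious obstacle here, since both directions are short once one puts Theorem~\ref{p:main2} and Gilmer's theorem in dialogue. The only point worth stating carefully is the assertion that every nonzero ideal of a Pr\"ufer domain is a $t$-ideal; everything else is either routine inclusion-chasing or a direct citation. Accordingly, the write-up should be brief, essentially: quote Gilmer for (i)$\Leftrightarrow$(ii), use $\overline{I}\subseteq\widetilde{I}$ for (iii)$\Rightarrow$(ii), and chain Theorem~\ref{p:main2} with the Pr\"ufer-implies-every-ideal-is-a-$t$-ideal observation for (i)$\Rightarrow$(iii).
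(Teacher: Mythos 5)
Your proof is correct and is essentially the argument the paper leaves implicit: the corollary is stated without proof as an immediate consequence of Gilmer's Theorem 24.7 together with Theorem~\ref{p:main2}, which is exactly the combination you spell out (the inclusion $\overline{I}\subseteq\widetilde{I}$ for one direction, and $\widetilde{I}=I_t=I$ in a Pr\"ufer domain for the other). The only detail worth keeping in the write-up is the observation that invertible ideals are divisorial, so the $t$-operation is trivial on a Pr\"ufer domain; that is stated correctly in your proposal.
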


 Now, we examine the correlation between the $t$-integral closure and $t$-reductions of ideals. In this vein, recall that, for the trivial operation, two crucial results assert that  \emph{$x \in \overline{I}$ $\Leftrightarrow$ $I$ is a reduction of $I+ Rx$} \cite[Corollary 1.2.2]{HS2}  and \emph{if $I$ is finitely generated and $J\subseteq I$, then:  $I\subseteq\overline{J}$   $\Leftrightarrow$ $J$ is a reduction of $I$} \cite[Corollary 1.2.5]{HS2}.  Next, we establish $t$-analogues of these two results.

\begin{proposition}\label{p:lem3}
Let $R$ be a domain and let $J\subseteq I$ be nonzero ideals of $R$.
\begin{enumerate}
\item $x \in \widetilde{I}$ $\Rightarrow$ $I$ is a $t$-reduction of $I+ Rx$.
\item Assume $I$ is finitely generated. Then: $I\subseteq\widetilde{J}$ $\Rightarrow$ $J$ is a $t$-reduction of $I$.
\end{enumerate}
Moreover, both implications are irreversible in general.
\end{proposition}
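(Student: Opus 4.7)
The plan is to reduce (b) to (a) via Lemma~\ref{b:lem2}, and to prove (a) directly by extracting the leading term of the integral equation; the counterexamples for the irreversibility are the delicate part and will require tailored constructions.

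For (a), suppose $x\in\widetilde{I}$ satisfies $x^{n}+a_{1}x^{n-1}+\cdots+a_{n}=0$ with $a_{i}\in(I^{i})_{t}$. Multiplying through by $x$ yields
$$x^{n+1}=-\sum_{i=1}^{n} a_{i}\,x^{n-i+1}.$$
For $1\leq i\leq n$, the inclusion $I^{i-1}x^{n-i+1}\subseteq (I+Rx)^{n}$ gives $I^{i}x^{n-i+1}\subseteq I(I+Rx)^{n}$. Applying the identity $(AB)_{t}=(A_{t}B)_{t}$ with $A=I^{i}$ and $B=Rx^{n-i+1}$, one gets $a_{i}x^{n-i+1}\in (I^{i})_{t}\cdot x^{n-i+1}\subseteq (I^{i}x^{n-i+1})_{t}\subseteq \bigl(I(I+Rx)^{n}\bigr)_{t}$, so $x^{n+1}\in\bigl(I(I+Rx)^{n}\bigr)_{t}$. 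Expanding $(I+Rx)^{n+1}=I(I+Rx)^{n}+\sum_{k=0}^{n} I^{n-k}x^{k+1}$, every summand with $k\leq n-1$ already lies in $I(I+Rx)^{n}$ (since $I^{n-k-1}x^{k+1}\subseteq (I+Rx)^{n}$), while the last summand $Rx^{n+1}$ lies in $(I(I+Rx)^{n})_{t}$ by what we just showed. Combined with the trivial inclusion $I(I+Rx)^{n}\subseteq (I+Rx)^{n+1}$, this gives $\bigl((I+Rx)^{n+1}\bigr)_{t}=\bigl(I(I+Rx)^{n}\bigr)_{t}$, so $I$ is a $t$-reduction of $I+Rx$.

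For (b), write $I=(x_{1},\ldots,x_{r})$. Since each $x_{i}\in\widetilde{J}$, part (a) gives that $J$ is a $t$-reduction of $J+Rx_{i}$ for every $i$. An $r$-fold application of Lemma~\ref{b:lem2} then makes $J=J+\cdots+J$ a $t$-reduction of $\sum_{i=1}^{r}(J+Rx_{i})=J+I=I$, as required.

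The main obstacle is the irreversibility. The natural strategy is to exploit Theorem~\ref{p:main1}: because $\widetilde{I}$ can fail to be $t$-closed, one should hunt in a tailored domain for an element $x\in (\widetilde{I})_{t}\setminus\widetilde{I}$ such that $I$ is a $t$-reduction of $I+Rx$ while $x$ is not $t$-integral over $I$; an analogous trick applied to a finitely generated $I$ sandwiched between $\widetilde{J}$ and $(\widetilde{J})_{t}$ should produce the failure of the converse of (b). The pullback constructions appearing in Examples~\ref{d:exa2}--\ref{d:exa3}, and the polynomial/power-series variants that presumably underlie Example~\ref{p:example2}, are the natural testing ground. Finding a single example that simultaneously displays $t$-reduction without $t$-integral dependence is what I expect to require the most care.
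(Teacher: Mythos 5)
Your proofs of the two implications are correct and essentially identical to the paper's. For (a) the paper works with the equation $x^{n}=-\sum a_{i}x^{n-i}$ directly, obtaining $x^{n}\in\left(I(I+Rx)^{n-1}\right)_{t}$ and hence $\left((I+Rx)^{n}\right)_{t}=\left(I(I+Rx)^{n-1}\right)_{t}$; your extra multiplication by $x$ merely shifts the exponent from $n-1$ to $n$ and changes nothing of substance. Your reduction of (b) to (a) via Lemma~\ref{b:lem2} is exactly the paper's argument.

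The genuine gap is the final clause of the statement: ``both implications are irreversible in general'' is part of what must be proved, and you supply only a search strategy, not counterexamples. The paper settles (a) with Example~\ref{p:example2}: in $R=\Z+x\Q(\sqrt{2})[x]$ with $I=(x/\sqrt{2})$ and $a=x/2$, an explicit computation of $\bigl(I(I+aR)\bigr)^{-1}=\frac{1}{x}\Q(\sqrt 2)[x]$ shows $a^{2}\in\bigl(I(I+aR)\bigr)_{v}$, so $I$ is a $t$-reduction of $I+aR$, while $a\notin\overline{I}=\widetilde{I}$ because $I$ is not a reduction of $I+aR$ (a $\Z[\sqrt 2]$-computation). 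Note that your heuristic for (a) --- hunt for $x\in(\widetilde{I})_{t}\setminus\widetilde{I}$ --- does point at this very example (the paper verifies $a\in(\widetilde{I})_{t}\setminus\widetilde{I}$ there), but membership in $(\widetilde{I})_{t}$ does not by itself yield that $I$ is a $t$-reduction of $I+Rx$, so the verification still has to be done by hand. For (b) the paper's route is different from the one you sketch: it does not exploit failure of $t$-closedness of $\widetilde{J}$ but rather takes an \emph{integrally closed} Mori, non-Krull domain (Example~\ref{d:exa3}), where finitely generated $J\subsetneqq I$ exist with $J$ a non-trivial $t$-reduction of $I$; since $R$ is integrally closed, Theorem~\ref{p:main2} gives $\widetilde{J}=J_{t}\subsetneqq I_{t}$, so $I\not\subseteq\widetilde{J}$. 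Without some such explicit constructions your proof of the proposition as stated is incomplete.
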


\begin{proof}
(a) Let $x \in \widetilde{I}$. Then, $x^n + a_1 x^{n-1}+\dots+ a_n = 0$ for some $a_i \in (I^i)_{t}$ for each $i \in \{1,\dots,n\}$. Hence
$$x^{n} \in I_{t} x^{n-1} + \dots + (I^{n})_{t} \subseteq \left(I_{t} x^{n-1} + \dots + (I^{n})_{t}\right)_{t} \subseteq \left(I(I+Rx)^{n-1}\right)_{t}.$$
It follows that $(I+Rx)^{n} \subseteq \left(I(I+Rx)^{n-1}\right)_{t}$. Hence, $((I+Rx)^{n})_{t} = \left(I(I+Rx)^{n-1}\right)_{t}$. Thus, $I$ is a $t$-reduction of $I+Rx$.

(b) Assume $I = (a_{1}, \ldots, a_{n})$, for some integer $n\geq 1$ and $a_{i} \in R\ \forall i =1, \ldots, n$. Suppose that $I \subseteq \widetilde{J}$. By (a), $J$ is a $t$-reduction of $J + Ra_{i}$, for each $i \in \{1,\ldots,n \}$. By Lemma~\ref{b:lem2}, $J$ is a $t$-reduction of $J+(a_{1}, \ldots, a_{n}) = I$, as desired.

The converse of (a) is not true, in general, as shown by Example~\ref{p:example2}(a). Also, (b) can be irreversible even with $I$ and $J$ both being finitely generated. For instance, consider the integrally domain $R$ of Example~\ref{d:exa3} with two ideals $J\subsetneqq I$, where $J$ is a non-trivial $t$-reduction of $I$ (i.e., $J_{t}\subsetneqq I_{t}$). By Theorem~\ref{p:main2}, $\widetilde{J}=J_{t}\nsupseteq I$.
\end{proof}

Next, we collect some ideal-theoretic properties of the integral closure of ideals.

\begin{remark}\label{p:rem1}
Let $R$ be a domain and let $I, J$ be nonzero ideals of $R$. Then:
\begin{xenumerate}
\item $I\subseteq \overline{I}\subseteq \widetilde{I}\subseteq\sqrt{I_{t}}$. Example~\ref{p:example1}(a) features a $t$-ideal for which these three containments are strict.  However, note that radical (and, a fortiori, prime) $t$-deals are necessarily $t$-integrally closed.

\item $\widetilde{~I\cap J~}\subseteq \widetilde{I}\cap \widetilde{J}$. The inclusion can be strict, for instance, in any integrally closed domain that is not a P$v$MD by \cite[Theorem 6]{A} and Theorem~\ref{p:main2}. Another example is provided in the non-integrally closed case by Example~\ref{p:example1}(c).

\item $\widetilde{I}+ \widetilde{J}\subseteq\widetilde{~I+J~}$. The inclusion can be strict. For instance, in $\Z[x]$, we have $\widetilde{(2)} + \widetilde{(x)} = (2,x)$ and $(2,x)^{-1}=\Z[x]$ so that $\widetilde{(2,x)}=(2,x)_{t}=\Z[x]$ (via Theorem~\ref{p:main2}).

\item By (\ref{power}), $\forall\ n\geq 1$, $(~\widetilde{I}~)^{n} \subseteq \widetilde{I^n}$. The inclusion can be strict, as shown by Example~\ref{p:example1}(b).

\item $\forall\ x\in R$, $x~\widetilde{I}\subseteq\widetilde{~xI~}$. Indeed, let $y \in x~\widetilde{I}$. Then, there is an equation of the form $y^n + (xa_1) y^{n-1}+\dots+ x^{n}a_n = 0$ with $x^{i}a_{i}  \in x^{i}(I^{i})_{t} = ((xI)^{i})_{t},\ i =1,\dots,n$. Hence, $y \in \widetilde{~xI~}$. Note that $x~\widetilde{I}=\widetilde{~xI~}$, $\forall\ x\in R$ and $\forall\ I$ ideal $\Leftrightarrow$ $R$ is integrally closed (Theorem~\ref{p:main2}).
\end{xenumerate}
\end{remark}

We close this section by the two announced examples.

\begin{example} \label{p:example1}
Let $R := \Z[\sqrt{-3}][2x, x^{2},x^{3}]$, $I := (2x^2, 2x^3, x^4, x^5)$, and $J := (x^3)$, where $x$ is an indeterminate over $\Z$. Then $I$ is a $t$-ideal of $R$ such that
\begin{enumerate}
\item $I \subsetneqq \overline{I} \subsetneqq \widetilde{I} \subsetneqq \sqrt{I}$.
\item $(~\widetilde{I}~)^2 \subsetneqq \widetilde{I^2}$.
\item $\widetilde{~J \cap I~} \subsetneqq \widetilde{J} \cap \widetilde{I}$.
\end{enumerate}
\end{example}

\begin{proof}
We first show that  $I$ is a $t$-ideal. Clearly, $\frac{1}{x^2} \Z[\sqrt{-3}][x] \subseteq I^{-1}$. For the reverse inclusion, let $f \in I^{-1} \subseteq x^{-4} R$. Then $f = \frac{1}{x^{4}} (a_{0}+a_{1}x + \dots + a_{n}x^{n})$ for some $n \in \N $, $a_{0} \in \Z [\sqrt{-3}]$, $a_{1}\in 2 \Z [\sqrt{-3}]$, and $a_{i} \in \Z [\sqrt{-3}]$ for $i \geq 2$. Since $2x^{2}f\in R$, then $a_{0} = a_{1} = 0$. It follows that $f \in \frac{1}{x^2} \Z[\sqrt{-3}][x]$. Therefore $I^{-1} = \frac{1}{x^{2}}\Z[\sqrt{-3}][x]$. Next, let $g \in (R:\Z[\sqrt{-3}][x]) \subseteq R$. Then $xg \in R$, forcing $g(0) \in 2\Z[\sqrt{-3}]$ and hence $g \in (2,2x,x^2,x^3)$. So $(R:\Z[\sqrt{-3}][x]) \subseteq (2,2x,x^{2},x^{3})$. The reverse inclusion is obvious. Thus, $(R:\Z[\sqrt{-3}][x]) = (2,2x,x^{2},x^{3})$. Consequently, we obtain $$I_{t} = I_{v} = (R: \frac{1}{x^{2}}\Z[\sqrt{-3}][x]) = x^{2} (R:\Z[\sqrt{-3}][x]) = I.$$

(a) Next, we prove the strict inclusions $I \subsetneqq \overline{I} \subsetneqq \widetilde{I} \subsetneqq \sqrt{I}$. For $I \subsetneqq \overline{I}$, notice that $(1+ \sqrt{-3})x^{2}\in \overline{I}\setminus I$ as $\big((1+ \sqrt{-3})x^{2}\big)^{3}=-8x^{6} \in I^{3}$ and $1+\sqrt{-3} \notin 2\Z[\sqrt{-3}]$.

 For $\overline{I} \subsetneqq \widetilde{I}$, we claim that $x^{3} \in \widetilde{I} \setminus \overline{I}$. Indeed, let $f\in (I^{2})^{-1}\subseteq x^{-8}R$. Then there are $n \in \N$, $a_{i} \in \Z[\sqrt{-3}]$ for $i \in \{0,2,\dots,n\}$, and $a_{1} \in 2\Z[\sqrt{-3}]$ such that $f=\frac{1}{x^{8}} (a_{0} + a_{1}x+ \dots + a_n x^{n})$. Since $4x^{4}f \in R$, then $a_{0} = a_{1} = a_{2} = a_{3}=0$. Therefore, $(I^{2})^{-1} \subseteq \frac{1}{x^{4}}\Z[\sqrt{-3}][x]$. The reverse inclusion is obvious. Hence, $(I^{2})^{-1} = \frac{1}{x^{4}}\Z[\sqrt{-3}][x]$. It follows that $$(I^2)_{t} = (I^2)_{v} = (R: \frac{1}{x^{4}}\Z[\sqrt{-3}][x]) = x^{4} (R:\Z[\sqrt{-3}][x]) = x^{2}I.$$
Hence $ x^{6} \in (I^2)_{t}$ and thus $x^{3} \in \widetilde{I}$. It remains to show that $x^{3} \not \in \overline{I}$. By \cite[Corollary 1.2.2]{HS2}, it suffices to show that $I$ is not a reduction of $I + (x^{3})$. Let $n \in \N $. It is easy to see that $x^{4} x^{3n}$ is the monic monomial with the smallest degree in $I\big(I+(x^{3})\big)^{n}$. Therefore, $x^{3(n+1)} = x^{3n+3} \in \big(I+(x^3)\big)^{n+1} \setminus I\big(I+(x^{3})\big)^{n}$. Hence, $I$ is not a reduction of $I + (x^{3})$, as desired.

For $\widetilde{I} \subsetneqq \sqrt{I}$, we  claim that $x^{2} \in \sqrt{I} \setminus \widetilde{I}$. Obviously, $x^{2} \in \sqrt{I}$. In order to prove that $x^{2} \not \in \widetilde{I}$, it suffices by Proposition~\ref{p:lem3} to show that $I$ is not a $t$-reduction of $I + (x^2)$. To this purpose, notice that $I+(x^2) = (x^2)$. Suppose by way of contradiction that  $(I(I+(x^2))^n)_{t} = ((I+(x^2))^{n+1})_{t}$ for some $n \in \N $. Then $(x^{2})^{n+1} = x^{2n+2} \in (I(I+(x^2))^n)_{t} = x^{2n}I$. Consequently, $x^{2} \in I$, absurd.

(b) We first prove that $\widetilde{I} = (2x^{2},(1+\sqrt{-3})x^{2}, x^{3}, x^{4})$. In view of (a) and its proof, we have $(2x^{2},(1+\sqrt{-3})x^{2}, x^{3}, x^{4})\subseteq\widetilde{I}$. Next, let $\alpha:=(a+b\sqrt{-3})x^{2}\in\widetilde{I}$ where $a,b \in \Z $. If $b = 0$,   then $a \neq 1$ as $x^{2} \not \in \widetilde{I}$.  Moreover, since $2x^2 \in \widetilde{I}$, $a$ must be even; that is, $\alpha\in (2x^2)$. Now assume $b\neq 0$. If $a = 0$, then $b \neq 1$ as $\sqrt{-3} x^{2}\not \in \widetilde{I}$. Moreover, since $2\sqrt{-3} x^{2} \in \widetilde{I}$, $b$ must be even; that is, $\alpha\in (2x^2)$. So suppose  $a \neq 0$. Then similar arguments force $a$ and $b$ to be of the same parity. Further, if  $a$ and $b$ are even, then $\alpha\in (2x^2)$; and if $a$ and $b$ are odd, then $\alpha\in  (2x^{2}, (1+\sqrt{-3})x^{2})$. Finally, we claim that $\widetilde{I}$ contains no monomials of degree 1. Deny and let $ax \in \widetilde{I}$, for some nonzero $a \in 2\Z[\sqrt{-3}]$. Then, by \cite[Remark 1.1.3(7)]{HS2}, $ax \in \widetilde{I} \subseteq \widetilde{(x^{2})} = \overline{(x^{2})}\subseteq\overline{x^{2}\Z[\sqrt{-3}][x]}$. By \cite[Corollary 1.2.2]{HS2}, $(x^{2})$ is a reduction of $(ax,x^2)$ in $\Z [\sqrt{-3}][x]$, absurd. Consequently,  $\widetilde{I} = (2x^{2},(1+\sqrt{-3})x^{2}, x^{3}, x^{4})$. Now, we are ready to check that $(~\widetilde{I}~)^2 \subsetneqq \widetilde{I^2}$. For this purpose, recall that $(I^2)_{t}= x^{2}I$. So, $2x^{4}\in \widetilde{I^2}$. We claim that $2x^{4}\notin (~\widetilde{I}~)^2$. Deny. Then, $2x^{4} \in (4x^{4}, 2(1+\sqrt{-3})x^{4})$, which yields $x^{2} \in (2x^{2}, (1+\sqrt{-3})x^{2}) \subseteq \widetilde{I}$, absurd.

(c) We claim that $x^3 \in \widetilde{I} \cap \widetilde{J} \setminus \widetilde{~I \cap J~}$. We proved in (a) that $x^{3} \in \widetilde{I}$. So, $x^{3} \in \widetilde{I} \cap \widetilde{J}$. Now, observe that $I \cap J = xI$ and assume, by way of contradiction, that $x^{3}\in \widetilde{~I \cap J~} = \widetilde{~xI~}$. Then $x^{3}$ satisfies an equation of the form $(x^{3})^{n} + a_{1} (x^{3})^{n-1} + \dots + a_{n} = 0$ with $a_{i} \in ((xI)^{i})_{t} = x^{i} (I^{i})_t,\ i=1,\dots,n$. For each $i$, let $a_{i}=x^{i} b_i$, for some $b_{i} \in (I^{i})_{t}$. Therefore  $(x^{2})^{n} + b_{1} (x^{2})^{n-1} + \dots + b_{n} = 0$. It follows that $x^{2} \in \widetilde{I}$, the desired contradiction.
\end{proof}

\begin{example}\label{p:example2}
Let $R:= \Z + x\Q (\sqrt{2})[x]$, $I:= (\frac{x}{\sqrt{2}})$, and $a:= \frac{x}{2}$, where $x$ is an indeterminate over $\Q$. Then:
\begin{enumerate}
\item $I$ is a $t$-reduction of $I+aR$ and  $a \not \in \widetilde{I}$.
\item $\widetilde{I} \subsetneqq (~\widetilde{I}~)_{t}$ and hence $\widetilde{I}\subsetneqq\widetilde{\widetilde{I}}$.
\end{enumerate}
\end{example}

\begin{proof}
(a)  First, we prove that $(I(I+aR))_{t} = ((I+aR)^{2})_{t}$. It suffices to show that $a^{2} \in (I(I+aR))_{t}$. For this purpose, let
$f\in(I(I+aR))^{-1} =(\frac{x^{2}}{2},\frac{x^{2}}{2\sqrt{2}})^{-1}\subseteq (\frac{x^{2}}{2})^{-1}= \frac{2}{x^{2}} R$. Then, $f = \frac{2}{x^{2}} (a_{0} + a_{1} x + \ldots + a_{n} x^{n})$, for some $n\geq 0$, $a_{0} \in \Z $, and $a_{i} \in \Q(\sqrt{2})$ for $i\geq 1$. Since $\frac{x^{2}}{2\sqrt{2}}f \in R$, $a_{0} = 0$.
It follows that $(I(I+aR))^{-1} \subseteq \frac{1}{x}  \Q (\sqrt{2})[x]$. On the other hand, $(I(I+aR)) (\frac{1}{x}\Q(\sqrt{2})[x])\subseteq R$.
So, we have
\begin{equation}\label{p:example2eq1}\big(I(I+aR)\big)^{-1} =\left(\frac{x^{2}}{2},\frac{x^{2}}{2\sqrt{2}}\right)^{-1}= \frac{1}{x}\Q(\sqrt{2})[x]\end{equation}
Now, clearly, $a^{2}(I(I+aR))^{-1}\subseteq R$. Therefore, $a^{2} \in (I(I+aR))_{v} = (I(I+aR))_{t}$, as desired.

Next, we prove that $a \not \in \widetilde{I}=\overline{I}$. By \cite[Corollary 1.2.2]{HS2}, it suffices to show that $I$ is not a reduction of $I+aR$. Deny and suppose that $I(I+aR)^{n} = (I+aR)^{n+1}$, for some positive integer $n$. Then $a^{n+1} = (\frac{x}{2})^{n+1} \in I(I+aR)^{n} = \frac{x}{\sqrt{2}}(\frac{x}{\sqrt{2}}, \frac{x}{2})^{n}$. One can check that this yields $1 \in \sqrt{2} (\sqrt{2},1)^{n}\subseteq (\sqrt{2})$ in $\Z[\sqrt{2}]$, the desired contradiction.

(b) We claim that $a \in (~\widetilde{I}~)_{t}$. Notice first that $x \in \widetilde{I}$ as $x^{2} \in I^{2}=(I^{2})_{t}$. Therefore, $A: = (x, \frac{x}{\sqrt{2}}) \subseteq \widetilde{I}$. Clearly, $A = \frac{2}{x}(\frac{x^{2}}{2}, \frac{x^{2}}{2 \sqrt{2}})$. Hence, by (\ref{p:example2eq1}), $A^{-1} = \Q (\sqrt{2})[x]$. However, $aA^{-1}\subseteq R$. Whence, $a \in A_{v} = A_{t} \subseteq (~\widetilde{I}~)_{t}$. Consequently, $a \in (~\widetilde{I}~)_{t} \setminus \widetilde{I}$.
\end{proof}

\section{Persistence and contraction of $t$-integral closure}\label{4}

Recall that the persistence and contraction of integral closure describe, respectively, the facts that for any ring homomorphism $\varphi: R\rightarrow T$, $\varphi(\overline{~I~})\subseteq \overline{\varphi(I)T}$ for every ideal $I$ of $R$, and $\overline{\varphi^{-1}(J)}=\varphi^{-1}(J)$ for every integrally closed ideal $J$ of $T$.

This section studies the persistence and contraction of $t$-integral closure. To this purpose, we first introduce the concept of $t$-compatible homomorphism which extends the well-known notion of $t$-compatible extension \cite{AEZ}. Throughout, we denote by $t$ (resp. $t_{1}$) and $v$ (resp. $v_{1}$) the $t$- and  $v$- closures in $R$ (resp., $T$).

\begin{lemma}\label{4:lem1}
Let $\varphi : R\longrightarrow T$  be a homomorphism of domains. Then, the following statements are equivalent:
\begin{enumerate}
\item $\varphi(I_{v})T \subseteq \big(\varphi(I)T\big)_{v_{1}}$, for each nonzero finitely generated ideal $I$ of $R$;
\item $\varphi(I_{t})T \subseteq \big(\varphi(I)T\big)_{t_{1}}$, for each nonzero ideal $I$ of $R$;
\item $\varphi^{-1}(J)$ is a $t$-ideal of $R$ for each $t_{1}$-ideal $J$ of $T$ such that $\varphi^{-1}(J) \neq 0$.
\end{enumerate}
\end{lemma}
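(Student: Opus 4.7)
The plan is to prove the cycle $(a) \Rightarrow (b) \Rightarrow (c) \Rightarrow (a)$, exploiting the fact that the $t$-closure of an arbitrary ideal is the directed union of the $v$-closures of its finitely generated subideals, and the fact that every $v_1$-ideal is automatically a $t_1$-ideal (since $J_{t_1} \subseteq J_{v_1}$ always, and equality holds once $J = J_{v_1}$).

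For $(a) \Rightarrow (b)$, I would take an arbitrary nonzero ideal $I$ of $R$ and write $I_t = \bigcup F_v$ where $F$ ranges over the nonzero finitely generated subideals of $I$. By (a), each $\varphi(F_v)T$ sits inside $(\varphi(F)T)_{v_1} \subseteq (\varphi(I)T)_{v_1} \subseteq (\varphi(I)T)_{t_1}$. Taking the union over $F$ yields $\varphi(I_t)T \subseteq (\varphi(I)T)_{t_1}$, which is (b).

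For $(b) \Rightarrow (c)$, let $J$ be a $t_1$-ideal of $T$ with $I := \varphi^{-1}(J) \neq 0$. By (b), $\varphi(I_t)T \subseteq (\varphi(I)T)_{t_1} \subseteq J_{t_1} = J$, so $\varphi(I_t) \subseteq J$ and therefore $I_t \subseteq \varphi^{-1}(J) = I$; hence $I$ is a $t$-ideal.

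The main work is $(c) \Rightarrow (a)$. Let $I$ be a nonzero finitely generated ideal of $R$, and set $A := (\varphi(I)T)_{v_1}$. Since $A$ is a $v_1$-ideal, it is a $t_1$-ideal. Its contraction $B := \varphi^{-1}(A)$ contains $I$, so $B$ is nonzero; by (c), $B$ is a $t$-ideal of $R$. Because $I$ is finitely generated, $I_v = I_t \subseteq B_t = B = \varphi^{-1}(A)$, which gives $\varphi(I_v) \subseteq A$ and hence $\varphi(I_v)T \subseteq A = (\varphi(I)T)_{v_1}$, as required.

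The step I expect to be the most delicate is the last implication, because one has to manufacture an appropriate $t_1$-ideal of $T$ whose contraction contains $I_v$; the natural choice $(\varphi(I)T)_{v_1}$ works precisely because $v_1$-ideals are $t_1$-ideals, and the finitely generated hypothesis in (a) is then exactly what collapses $I_t$ to $I_v$ so that hypothesis (c) can be applied to the pullback. The other two implications are essentially bookkeeping with the definition $I_t = \bigcup F_v$ and with the monotonicity of the star operations.
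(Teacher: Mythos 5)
Your argument is essentially sound and uses the same toolkit as the paper, but it traverses the cycle in the opposite direction: you prove $(a)\Rightarrow(b)\Rightarrow(c)\Rightarrow(a)$, whereas the paper proves $(a)\Rightarrow(c)\Rightarrow(b)\Rightarrow(a)$. Your $(b)\Rightarrow(c)$ and $(c)\Rightarrow(a)$ are correct as written: in particular, in $(c)\Rightarrow(a)$ the choices are right ($(\varphi(I)T)_{v_{1}}$ is a $v_{1}$-ideal, hence a $t_{1}$-ideal; its contraction contains $I$ and so is nonzero; and $I_{v}=I_{t}$ for finitely generated $I$), and this step is the precise analogue of the paper's $(c)\Rightarrow(b)$. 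The trade-off between the two routes is which implication gets to be ``trivial'': in the paper it is $(b)\Rightarrow(a)$ (restrict to finitely generated $I$ and use $t_{1}\le v_{1}$), while in your route it is nothing --- you pay for the union argument in $(a)\Rightarrow(b)$, which the paper avoids by instead threading the finitely generated subideals through the contraction in its $(a)\Rightarrow(c)$.

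There is, however, one step that is wrong as literally written. In $(a)\Rightarrow(b)$ your chain ends with $(\varphi(F)T)_{v_{1}}\subseteq(\varphi(I)T)_{v_{1}}\subseteq(\varphi(I)T)_{t_{1}}$, and the last containment is backwards: for any nonzero ideal $A$ one has $A_{t_{1}}\subseteq A_{v_{1}}$, and the reverse inclusion fails in general (that is exactly the distinction between $t$- and $v$-ideals). The step is easily repaired, because the ideal you actually need to control is $\varphi(F)T$, which is finitely generated (being generated by the images of a finite generating set of $F$), so that $(\varphi(F)T)_{v_{1}}=(\varphi(F)T)_{t_{1}}\subseteq(\varphi(I)T)_{t_{1}}$ by monotonicity of $t_{1}$. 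In other words, you must invoke the identity $v_{1}=t_{1}$ on the finitely generated ideal \emph{before} enlarging $\varphi(F)T$ to $\varphi(I)T$, not after. With that reordering, $(a)\Rightarrow(b)$ and hence the whole proof is correct.
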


\begin{proof}
(a) $\Rightarrow$ (c) Let $J$ be a $t_{1}$-ideal of $T$ and let $A$ be any finitely generated ideal of $R$ contained in $\varphi^{-1}(J)$. Then, $\varphi(A)T \subseteq J = J_{t_{1}}$. Further, $\varphi(A)T$ is finitely generated. Hence, $\big(\varphi(A)T\big)_{v_{1}} \subseteq J$. It follows, via (a), that $\varphi(A_{v})T \subseteq \big(\varphi(A)T\big)_{v_{1}} \subseteq J$. Therefore, $A_{v} \subseteq\varphi^{-1}(J)$ and thus $\varphi^{-1}(J)$ is a $t$-ideal.

(c) $\Rightarrow$ (b) Let $I$ be a nonzero ideal of $R$. The ideal $J := \big(\varphi(I)T\big)_{t_{1}}$ is clearly a $t_{1}$-ideal of $T$ with  $\varphi^{-1}(J) \neq 0$. By (c), $\varphi^{-1}(J)$ is a $t$-ideal of $R$. Consequently, we obtain
$$I_{t} \subseteq \Big(\varphi^{-1}(\varphi(I)T)\Big)_{t} \subseteq \Big(\varphi^{-1}\big(\varphi(I)T\big)_{t_{1}}\Big)_{t} = \big(\varphi^{-1}(J)\big)_{t} = \varphi^{-1}(J).$$
So that $\varphi(I_{t})T \subseteq J = \big(\varphi(I)T\big)_{t_{1}}$, as desired.

(b) $\Rightarrow$ (a) Trivial.
\end{proof}

\begin{definition}
A homomorphism of domains $\varphi : R\longrightarrow T$ is called $t$-compatible if it satisfies the equivalent conditions of Lemma~\ref{4:lem1}.
\end{definition}

When $\varphi$ denotes the natural embedding $R\subseteq T$, this definition matches the notion of $t$-compatible extension (i.e., $I_{t}T\subseteq (IT)_{t_{1}}$ for every ideal $I$ of $R$) well studied in the literature \cite{AEZ,BGR,E,FG}.

Next, we announce the main result of this section which establishes persistence and contraction of $t$-integral closure under $t$-compatible homomorphisms.

\begin{proposition} \label{4:main4}
Let $\varphi: R \longrightarrow T$ be a $t$-compatible homomorphism of domains, $I$ an ideal of $R$, and $J$ an ideal of $T$. Then:
\begin{enumerate}
\item $\varphi(~\widetilde{I}~)T \subseteq \widetilde{\varphi(I)T}$.
\item $\widetilde{\varphi^{-1}(J)} \subseteq \varphi^{-1}(~\widetilde{J}~)$. Moreover, if $J$ is $t$-integrally closed, then $\widetilde{\varphi^{-1}(J)}=\varphi^{-1}(J)$.
\end{enumerate}
\end{proposition}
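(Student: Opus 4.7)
My plan is to push the integrality equation through $\varphi$ and let $t$-compatibility do all the work: the key observation is that if $a \in (I^{i})_{t}$, then condition (b) of Lemma~\ref{4:lem1} gives $\varphi(a) \in \big(\varphi(I)T\big)^{i}_{t_{1}}$, so the equation of $t$-integral dependence of an element $y$ over $I$ transports verbatim into an equation of $t_{1}$-integral dependence of $\varphi(y)$ over $\varphi(I)T$.

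For (a), I would start with $y \in \widetilde{I}$ satisfying $y^{n} + a_{1}y^{n-1} + \cdots + a_{n} = 0$ with $a_{i} \in (I^{i})_{t}$. Applying $\varphi$ yields $\varphi(y)^{n} + \varphi(a_{1})\varphi(y)^{n-1} + \cdots + \varphi(a_{n}) = 0$, and $t$-compatibility (condition (b) of Lemma~\ref{4:lem1}) gives
\[
\varphi(a_{i}) \in \varphi((I^{i})_{t})T \subseteq \big(\varphi(I^{i})T\big)_{t_{1}} = \big((\varphi(I)T)^{i}\big)_{t_{1}}.
\]
Hence $\varphi(y) \in \widetilde{\varphi(I)T}$. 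Since Theorem~\ref{p:main1} ensures $\widetilde{\varphi(I)T}$ is an ideal of $T$, it is closed under multiplication by $T$, so $\varphi(\widetilde{I})T \subseteq \widetilde{\varphi(I)T}$.

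For (b), I would proceed symmetrically: take $x \in \widetilde{\varphi^{-1}(J)}$ with $x^{n} + a_{1}x^{n-1} + \cdots + a_{n} = 0$ and $a_{i} \in \big((\varphi^{-1}(J))^{i}\big)_{t}$ (the case $\varphi^{-1}(J) = 0$ being trivial). Applying $\varphi$ yields the corresponding equation for $\varphi(x)$. Using $t$-compatibility together with the elementary containment $\varphi(\varphi^{-1}(J))^{i} \subseteq J^{i}$ (so $\varphi((\varphi^{-1}(J))^{i})T \subseteq J^{i}$), I get
\[
\varphi(a_{i}) \in \varphi\big(((\varphi^{-1}(J))^{i})_{t}\big)T \subseteq \big(\varphi((\varphi^{-1}(J))^{i})T\big)_{t_{1}} \subseteq (J^{i})_{t_{1}},
\]
which exhibits $\varphi(x)$ as $t$-integral over $J$; thus $x \in \varphi^{-1}(\widetilde{J})$. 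The ``moreover'' clause follows at once: if $J = \widetilde{J}$, the inclusion just proved gives $\widetilde{\varphi^{-1}(J)} \subseteq \varphi^{-1}(J)$, and the reverse inclusion $\varphi^{-1}(J) \subseteq \widetilde{\varphi^{-1}(J)}$ is automatic.

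There is no deep obstacle here; the only point requiring vigilance is confirming that each transported coefficient $\varphi(a_{i})$ genuinely lands inside $\big((\varphi(I)T)^{i}\big)_{t_{1}}$ (respectively $(J^{i})_{t_{1}}$), rather than merely in the corresponding ideal before closure. This is precisely the content of condition (b) of Lemma~\ref{4:lem1}, applied to the ideals $I^{i}$ and $(\varphi^{-1}(J))^{i}$, so the whole proof reduces to one invocation of $t$-compatibility plus the fact that $\widetilde{(\cdot)}$ is an ideal (Theorem~\ref{p:main1}).
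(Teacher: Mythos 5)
Your proof is correct and follows essentially the same route as the paper: transport the equation of $t$-integral dependence through $\varphi$ and use condition (b) of Lemma~\ref{4:lem1} to place each coefficient in $\big((\varphi(I)T)^{i}\big)_{t_{1}}$. The only cosmetic differences are that the paper absorbs multiplication by elements of $T$ by multiplying the equation through by $z^{n}$ rather than citing Theorem~\ref{p:main1}, and deduces (b) from (a) applied to $\varphi(\varphi^{-1}(J))T$ together with monotonicity of $\widetilde{(\cdot)}$ instead of redoing the computation.
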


\begin{proof}
(a) Let $x\in \widetilde{I}$, $y:= \varphi(x)$, and $z \in T$. We shall prove that $yz \in \widetilde{\varphi(I)T}$. Suppose that $x$ satisfies the equation $x^{n}+a_{1}x^{n-1}+...+a_{n}=0$ with $a_{i}\in (I^{i})_{t}$ for $i=1,...,n$. Then, apply $\varphi$ to this equation and multiply through by $z^{n}$ to obtain
$$(yz)^{n}+b_{1}z (yz)^{n-1}+...+b_{n-1}z^{n-1}(yz)+b_{n}z^{n}=0$$
 where $b_{i} := \varphi(a_{i}) \in \varphi((I^{i})_{t})T \subseteq (\varphi(I^{i})T)_{t_{1}} = \big(\big(\varphi(I)T\big)^{i}\big)_{t_{1}}$ by $t$-compatibility. Hence $b_{i} z^{i} \in \big(\big(\varphi(I)T\big)^{i}\big)_{t_{1}}$, for $i=1,...,n$. Consequently, $yz \in \widetilde{\varphi(I)T}$.

(b) Let $H:=\varphi(\varphi^{-1}(J))T$. Then, by (a), we have $$\varphi\big(\widetilde{\varphi^{-1}(J)}\big)T \subseteq \widetilde{H} \subseteq \widetilde{J}.$$
It follows that $\widetilde{\varphi^{-1}(J)}\subseteq \varphi^{-1}(\widetilde{J})$, as desired. Now, if $J$ is $t$-integrally closed, then $\widetilde{\varphi^{-1}(J)} \subseteq \varphi^{-1}(\widetilde{J}) = \varphi^{-1}(J) \subseteq \widetilde{\varphi^{-1}(J)}$ and hence the equality holds.
\end{proof}

 In the special case when both  $R$ and $T$ are integrally closed, persistence of $t$-integral closure coincides with $t$-compatibility by Theorem~\ref{p:main2}. This shows that the $t$-compatibility assumption in Proposition~\ref{4:main4} is imperative.

\begin{corollary}\label{4:extension}
Let $R \subseteq T$ be a $t$-compatible extension of domains and $I$ an ideal of $R$. Then:
\begin{enumerate}
\item $~\widetilde{I}T \subseteq \widetilde{IT}$.
\item $ \widetilde{I} \subseteq \widetilde{IT \cap R} \subseteq \widetilde{IT} \cap R$.
\end{enumerate}
Moreover, the above inclusions are strict in general.
\end{corollary}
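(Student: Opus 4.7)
The plan is to derive (a) and (b) directly from Proposition~\ref{4:main4} applied to the inclusion map $\iota\colon R \hookrightarrow T$, which is $t$-compatible by hypothesis. For (a), observe that $\iota(\widetilde{I})T=\widetilde{I}T$ and $\iota(I)T=IT$, so Proposition~\ref{4:main4}(a) reads exactly $\widetilde{I}T \subseteq \widetilde{IT}$. For the first inclusion in (b), monotonicity of the $t$-integral closure---immediate from the definition, since $I\subseteq IT\cap R$ forces $(I^{i})_{t}\subseteq ((IT\cap R)^{i})_{t}$ for every $i$, so that each integrality equation over $I$ is one over $IT\cap R$---yields $\widetilde{I}\subseteq \widetilde{IT\cap R}$. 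For the second inclusion, apply Proposition~\ref{4:main4}(b) with $J:=IT$: since $\iota^{-1}(IT)=IT\cap R$ and $\iota^{-1}(\widetilde{IT})=\widetilde{IT}\cap R$, one obtains $\widetilde{IT\cap R}\subseteq \widetilde{IT}\cap R$.

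The ``Moreover'' clause is the only part requiring substantive work, since it calls for three examples of strictness. For $\widetilde{I}T\subsetneqq \widetilde{IT}$, I would look for a $t$-compatible embedding $R\subseteq T$ in which $T$ is integrally closed (so that $\widetilde{IT}=(IT)_{t_{1}}$ by Theorem~\ref{p:main2}) but $R$ is not, allowing elements of $(IT)_{t_{1}}$ to escape $\widetilde{I}T$; a pullback in the spirit of Example~\ref{d:exa2} or a polynomial-type variant of Example~\ref{p:example1} is the natural source. Strictness of $\widetilde{I}\subsetneqq \widetilde{IT\cap R}$ can be arranged whenever $IT\cap R$ strictly enlarges $I$ and admits new $t$-integral elements, and strictness of $\widetilde{IT\cap R}\subsetneqq \widetilde{IT}\cap R$ reflects the standard discrepancy between forming the $t$-closure in $T$ and contracting back to $R$.

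The hard part is therefore not the theoretical derivation---which collapses to a one-line invocation of Proposition~\ref{4:main4}---but the verification of strictness through explicit computation of $t$-closures in a concrete extension. The pullback and polynomial constructions already deployed in Sections~\ref{d} and~\ref{p} are the most promising source, since in those settings both $(IT)_{t_{1}}$ and $\widetilde{I}$ were computed by hand, making the required comparison tractable.
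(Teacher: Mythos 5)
Your derivation of (a) and of both inclusions in (b) is correct and is exactly the paper's route: apply Proposition~\ref{4:main4} to the inclusion $\iota\colon R\hookrightarrow T$ (noting $\iota^{-1}(IT)=IT\cap R$ and $\iota^{-1}(\widetilde{IT})=\widetilde{IT}\cap R$), and get the first inclusion in (b) from monotonicity of the $t$-integral closure, since $I\subseteq IT\cap R$.

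The genuine gap is the ``Moreover'' clause. The strictness assertions are part of the statement to be proved, and you explicitly defer them: you describe where one might look for examples but verify nothing, so the proof is incomplete. For the record, the paper's examples are these. For strictness in (a) and in the second inclusion of (b), it uses Example~\ref{4:example3}: $R:=E+xS^{-1}E[x]$ (Zafrullah's construction, $E$ the ring of entire functions), a $P$-domain that is not a P$v$MD, hence an integrally closed domain admitting a prime $t$-ideal $P$ such that $PR_{P}$ is not a $t$-ideal of $R_{P}$. Taking $T:=R_{P}$ (flat, hence $t$-compatible), Theorem~\ref{p:main2} gives $\widetilde{P}R_{P}=P_{t}R_{P}=PR_{P}\subsetneqq R_{P}=(PR_{P})_{t_{1}}=\widetilde{PR_{P}}$, and likewise $P=\widetilde{PR_{P}\cap R}\subsetneqq \widetilde{PR_{P}}\cap R=R$. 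Note this runs contrary to your heuristic for (a): in the paper's example \emph{both} $R$ and $T$ are integrally closed, and the strictness is driven not by a failure of integral closedness of $R$ but by the failure of $PR_{P}$ to remain a $t$-ideal after localization; a construction with $R$ not integrally closed would require fresh, nontrivial computations of the sort done in Example~\ref{p:example1}. For the first inclusion in (b), the paper's example is far simpler than your sketch suggests: take any integrally closed $R$ with primes $P\subsetneqq Q$ and $x\in Q\setminus P$, and $T:=R_{P}$; then $xR_{P}=R_{P}$, so $\widetilde{xR_{P}\cap R}=\widetilde{R}=R$, while $\widetilde{(x)}=(x)\subsetneqq R$ by Theorem~\ref{p:main2}. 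To complete your proof you must either reproduce such examples or construct and fully verify your own.
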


\begin{proof}
(a) and (b) are direct consequences of Proposition~\ref{4:main4}. The inclusion in (a) and second inclusion in (b) can be strict as shown by Example~\ref{4:example3}.  The first inclusion in (b) can also be strict. For instance, let $R$ be an integrally closed domain and let $P \subsetneqq Q$ be prime ideals of $R$ with $x \in Q \setminus P$. Then $\widetilde{(x)}= (x)$ by Theorem~\ref{p:main2}. While $\widetilde{xR_{P} \cap R} = \widetilde{R_{P} \cap R} = R$. That is, $\widetilde{(x)} \subsetneqq \widetilde{(x)R_{P} \cap R}$.
\end{proof}

\begin{corollary} \label{4:loc}
Let $R$ be a domain, $I$ an ideal of $R$, and $S$ a multiplicatively closed subset of $R$. Then $S^{-1}\widetilde{I} \subseteq \widetilde{S^{-1}I}$.
\end{corollary}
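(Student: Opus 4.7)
The plan is to derive this corollary as a direct consequence of Proposition~\ref{4:main4}(a) (equivalently, Corollary~\ref{4:extension}(a)) applied to the canonical localization homomorphism $\varphi : R \longrightarrow S^{-1}R$ defined by $\varphi(r) = r/1$. Both $R$ and $S^{-1}R$ are domains sharing the same quotient field, so $\varphi$ fits the framework of Section~4.

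The only thing that needs checking is that $\varphi$ is $t$-compatible in the sense of Lemma~\ref{4:lem1}. I would verify condition (b) of that lemma: for any nonzero ideal $I$ of $R$, one needs $S^{-1}(I_{t}) \subseteq (S^{-1}I)_{t_{1}}$, where $t_{1}$ denotes the $t$-operation on $S^{-1}R$. This is exactly the content of \cite[Lemma 3.4]{Kg2}, which was already invoked in the proof of Lemma~\ref{b:lem5}; in fact, that lemma gives the stronger equality $(S^{-1}I)_{t_{1}} = (S^{-1}(I_{t}))_{t_{1}}$, from which the required inclusion follows immediately.

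Once $t$-compatibility is established, Proposition~\ref{4:main4}(a) yields
\[
\varphi(\widetilde{I})\,S^{-1}R \subseteq \widetilde{\varphi(I)\,S^{-1}R}.
\]
Since $\varphi(\widetilde{I})\,S^{-1}R = S^{-1}\widetilde{I}$ and $\varphi(I)\,S^{-1}R = S^{-1}I$, this rewrites as $S^{-1}\widetilde{I} \subseteq \widetilde{S^{-1}I}$, which is the desired conclusion.

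There is no real obstacle here; the corollary is essentially a specialization of Proposition~\ref{4:main4}(a) to the localization map, and the $t$-compatibility hypothesis is furnished gratis by the standard behavior of the $t$-operation under localization recorded in Lemma~\ref{b:lem5}.
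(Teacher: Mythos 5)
Your proposal is correct and follows essentially the same route as the paper: both reduce the corollary to Proposition~\ref{4:main4}(a)/Corollary~\ref{4:extension}(a) applied to the localization map, the only difference being that you justify the $t$-compatibility of $R\to S^{-1}R$ directly via \cite[Lemma 3.4]{Kg2} (as in Lemma~\ref{b:lem5}), whereas the paper cites the general fact that flatness implies $t$-compatibility \cite[Proposition 0.6]{FG}. Either justification is valid, so there is nothing to fix.
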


\begin{proof}
It is well-known that flatness implies $t$-compatibility \cite[Proposition 0.6]{FG}. Hence, Corollary~\ref{4:extension} leads to the conclusion.
\end{proof}

For the integral closure, we always have $S^{-1}\overline{I}=\overline{S^{-1}I}$ \cite[Proposition 1.1.4]{HS2}. But in the above corollary the inclusion can be strict, as shown by the following example.

\begin{example}\label{4:example3}
We use a construction due to Zafrullah \cite{Z1}. Let $E$ be the ring of entire functions and $x$ an indeterminate over $E$. Let $S$ denote the set generated by the principal primes of $E$. Then, we claim that  $R := E+xS^{-1}E[x]$ contains a prime ideal $P$ such that $S^{-1} \widetilde{P} \subsetneqq \widetilde{S^{-1}P}$. Indeed, $R$ is a $P$-domain that is not a P$v$MD \cite[Example 2.6]{Z1}. By \cite[Proposition 3.3]{Z2}, there exists a prime $t$-ideal $P$ in $R$ such that $PR_{P}$ is not a $t$-ideal of $R_{P}$. By Theorem~\ref{p:main2}, we have $$\widetilde{P}R_{P} = PR_{P} \subsetneqq R_{p} = (PR_{P})_{t} = \widetilde{PR_{P}}$$ since $R$ is integrally closed. Also notice that $P=\widetilde{PR_{P}\cap R}\subsetneqq \widetilde{PR_{P}}\cap R=R$.
\end{example}

\begin{corollary}
Let $R$ be a domain and $I$ a $t$-ideal that is $t$-locally $t$-integrally closed (i.e., $I_{M}$ is $t$-integrally closed in $R_{M}$ for every maximal $t$-ideal $M$ of $R$). Then $I$ is $t$-integrally closed.
\end{corollary}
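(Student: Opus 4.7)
The plan is to establish the nontrivial containment $\widetilde{I}\subseteq I$, since $I\subseteq\widetilde{I}$ holds in any domain. Two ingredients suffice: Corollary~\ref{4:loc} (localization commutes with $t$-integral closure up to inclusion), and the standard intersection formula $I=\bigcap_{M}IR_{M}$, where $M$ ranges over the maximal $t$-ideals of $R$, valid for every $t$-ideal $I$.

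First, I would fix $x\in\widetilde{I}$ arbitrarily. For each maximal $t$-ideal $M$ of $R$, taking $S:=R\setminus M$ in Corollary~\ref{4:loc} gives
$$\tfrac{x}{1}\in S^{-1}\widetilde{I}\subseteq \widetilde{S^{-1}I}=\widetilde{I_{M}}.$$
By the $t$-local hypothesis, $\widetilde{I_{M}}=I_{M}$ in $R_{M}$, so $x\in IR_{M}$ for every maximal $t$-ideal $M$ of $R$.

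It then remains to verify the intersection formula. Consider the conductor $J:=(I:_{R}x)$. For each maximal $t$-ideal $M$, the membership $x\in IR_{M}$ produces some $s_{M}\in R\setminus M$ with $s_{M}x\in I$; thus $s_M\in J\setminus M$, so $J\not\subseteq M$. Since every proper $t$-ideal of $R$ is contained in some maximal $t$-ideal, this forces $J_{t}=R$. Using the basic identity $(Jx)_{t}=(J_{t}x)_{t}$ recalled just before Lemma~\ref{b:lem2}, together with the fact that $I$ is a $t$-ideal, I obtain
$$Rx=J_{t}\,x\subseteq (J_{t}x)_{t}=(Jx)_{t}\subseteq I_{t}=I,$$
so $x\in I$. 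This gives $\widetilde{I}\subseteq I$ and hence $\widetilde{I}=I$.

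I expect the argument to be essentially routine: Corollary~\ref{4:loc} does the descent to maximal $t$-ideals, and the conductor trick reassembles the local information using only tools already invoked in the paper. The only point requiring any care is the appeal to ``every proper $t$-ideal is contained in a maximal $t$-ideal,'' which is a classical Zorn's lemma argument for the $t$-operation and not a genuine obstacle.
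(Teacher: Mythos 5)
Your proof is correct and follows essentially the same route as the paper: descend via Corollary~\ref{4:loc} to get $\widetilde{I}\subseteq\bigcap_{M}\widetilde{I_{M}}=\bigcap_{M}I_{M}$ over the maximal $t$-ideals, then conclude. The only difference is that the paper simply invokes the standard identity $I=\bigcap_{M}I_{M}$ for $t$-ideals, whereas you prove it with the conductor argument; that extra verification is sound but not needed as new content.
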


\begin{proof}
Let $\Max_{t}(R)$ denote the set of maximal $t$-ideals of $R$. By Corollary~\ref{4:loc}, we have
$$\begin{array}{lll}
 \widetilde{I}      &\subseteq      &\bigcap\limits_{M_{i} \in \Max_{t}(R)}(\widetilde{~I~})_{M_{i}}\\
                    &\subseteq      &\bigcap\limits_{M_{i} \in \Max_{t}(R)}\widetilde{I_{M_{i}}}\\
                    &=              &\bigcap\limits_{M_{i} \in \Max_{t}(R)} I_{M_{i}}\\
                    &=              &I.
\end{array}$$
Consequently, $I$ is $t$-integrally closed.
\end{proof}

\end{document}